\providecommand\@dotsep{5}
\def\listtodoname{List of Todos}
\def\listoftodos{\@starttoc{tdo}\listtodoname}
\numberwithin{equation}{section}
\newtheorem{theorem}{Theorem}[section]
\newtheorem{proposition}[theorem]{Proposition}
\newtheorem{lemma}[theorem]{Lemma}
\newtheorem{claim}[theorem]{Claim}
\newtheorem{remark}{Remark}
\newcommand{\R}{\mathbb{R}}
\begin{document}

\title{A Critical Neumann problem with anisotropic $p-$Laplacian }
%Positive solution for a critical Neumann problem\\ with anisotropic $p-$Laplacian}
%A Critical Neumann problem with anisotropic $p-$Laplacian }
\author{ Gustavo F. Madeira, Olímpio H. Miyagaki$^{\Diamond}$, Alânnio B. Nóbrega$^{\star}$ }

\address[Alânnio B. Nóbrega]
{\newline\indent Unidade Acadêmica de Matemática
	\newline\indent 
	Universidade Federal de Campina Grande
	\newline\indent
	CEP: 58429-900- Campina Grande - PB, Brazil} 
\email{alannio@mat.ufcg.edu.br}

\address[Gustavo F. Madeira, Olímpio H. Miyagaki]
{\newline\indent Departamento de Matemática
\newline\indent 
Universidade Federal de São Carlos
\newline\indent
CEP: 13565-905-São Carlos-SP,Brazil} 
\email{gfmadeira@ufscar.br, olimpio@ufscar.br}

\pretolerance10000

\begin{abstract} 
We are concerned with the existence of solution of the problem 
%We are concerned with to show the existence of solution for the problem 
$$\left\{\begin{array}{lcl}
	-\Delta ^H_pu+|u|^{p-2}u=\lambda|u|^{q-2}u+ |u|^{p^*-2}u\quad \mbox{in\,\,}\Omega,\\
	u>0\quad \mbox{in\,\,}\Omega,\\
	a(\nabla u)\cdot \nu =0\quad \mbox{on\,\,}\partial \Omega,
\end{array}\right. \eqno{(P)}$$ where  $\Delta ^H_pu=\mbox{div\,}(a(\nabla u))$, with $a(\xi)=H^{p-1}(\xi)\nabla H(\xi),\, \xi \in \R^N,$ $N\geqslant3,$ is the anisotropic $p$-Laplacian with $1<p<N$, $\lambda>0$ is a parameter, and   $p < q<p^*=pN/(N-p)$.  Further, $\Omega \subset \Sigma$ is a $C^1$ bounded domain inside a convex open cone $\Sigma$ in $\R^N$ with $\partial \Omega \cap \partial \Sigma$ being a $C^1$-manifold, and $\nu$ is the unit outward normal to $\partial \Omega$. To succeed with a variational approach, where the strong convergence of a bounded (PS) subsequence needs to be proved, one has to deal with anisotropic norms in the absence of a Tartar's type inequality, unlike the isotropic $p$-Laplace case. This is overcome by proving the a.e. convergence of its gradients. Furthermore, the solution of $(P)$ is shown to belong to $C^{1,\alpha}(\Omega)$, and is strictly positive in $\Omega$. Such conclusions are achieved from classical elliptic regularity theory and a Harnack inequality, since the solution of $(P)$ is bounded. This in turn is a consequence of a result in this paper which ensures that any $W^{1,p}$-solution of critical Neumann problems with the anisotropic $p$-Laplacian operator on bounded Lipschitz domains in $\mathbb{R}^N$ $(N\geqslant3)$ is bounded.
  
\end{abstract}
\thanks{$^{\Diamond}$ O. H. Miyagaki  was  supported in part by  CNPq Proc. $\#303256/2022-2.$}
\thanks{$^{\star}$ A. B. Nóbrega was  partially supported by grant  $\#3177/2021$ FAPESQ-PB and by Projeto Universal FAPESQ-PB $\#3031/2021$}
\subjclass[2020]{35A15, 35J92, 35B33, 3520} 
\keywords{Anisotropic $p$-Laplacian, Anisotropic norms, Variational Methods, Critical exponents, Quasilinear equations, Perturbations}

\maketitle

\section{Introduction and Main Result}

Elliptic problems of the form
\begin{equation}\label{P1}
\left\{\begin{array}{lcl}
	-\Delta u=f(x,u)+ u^{2^*-1}\,\quad \mbox{in\,\,}\Omega,\\
	u>0\,\quad \mbox{in\,\,} \Omega\\
	u =0\,\quad \mbox{on\,\,} \partial \Omega,
\end{array}\right.
\end{equation}have been intensively studied since the second half of last century. A common situation in \eqref{P1} stands to the case where $\Omega$ is a bounded domain in $\R^N$ with a $C^1$ boundary, $N \ge 3$, $2^*=2N/(N-2)$ is the critical Sobolev exponent and $f:\Omega \times \R \rightarrow \R$ is a Caratheodory lower order perturbation of $u^{2^*-1}$ at infinity. The corresponding literature on this setting is very large, and classical results have been obtained. For instance, Pohozaev  \cite{P} have proved that if $\Omega $ is a star-shaped domain, then problem \eqref{P1} has no solution in the purely critical power case (i.e., with $f(x,u)=0$ in \eqref{P1}). Later on, Brezis and Nirenberg \cite{BN} have proved that if $f(x,u)$ is a suitable lower order perturbation of $u^{2^*-1}$ at infinity, then there exists a solution of \eqref{P1}. Capozzi, Fortunato and Palmieri \cite{CFP} have extended some results in \cite{BN}, Cerami, Fortunato and Struwe \cite{CSF} have proved the multiplicity of solution of \eqref{P1}. Cerami, Solimini and Struwe \cite{CSS} have studied the existence of nodal and radial solutions of \eqref{P1}, and Garcia and Peral \cite{AA} have considered \eqref{P1} with the $p$-Laplace, among many other works, just to quote a few. %Garcia and Peral \cite{AA} considered a problem as (\ref{P1}) with the $p-$Laplacian operator.

In \cite{Wang}, Wang has considered the counterpart of \eqref{P1} with Neumann boundary condition, showing the existence of solution of 
%In \cite{Wang}, Wang has shown the existence of solution of the following Neumann problem
 \begin{equation}\label{prob-Neumann-Wang1}
 	\left\{\begin{array}{lcl}
 		-\Delta u=f(x,u)+ u^{2^*-1}\,\quad \mbox{in\,\,}\Omega,\\
 		u>0\,\quad \mbox{in\,\,}\Omega,\\
 		\partial u/\partial \nu + \alpha(x)u =0\,\quad \mbox{on\,\,}\partial\Omega.
 		%%\frac{\partial u}{\partial \nu}+\alpha(x)u =0\,\quad \mbox{on\,\,}\partial\Omega.
 	\end{array}\right.
 \end{equation}
As ingredients in \eqref{prob-Neumann-Wang1}, $f(x,u)$ is a suitable lower order perturbation of $u^{2^*-1}$ at infinity, $\nu$ is the outward normal to $\partial \Omega,$ and $\alpha\in L^{\infty}(\partial\Omega)$, $\alpha \not\equiv0,$ is a non-negative function.
 %$\alpha:\Omega\rightarrow \R,$ $\alpha \geq 0, \alpha\neq 0.$
Later on, Wang \cite{Wang2} has studied a critical Neumann problem with the $p$-Laplace operator. Existence results for Neumann problems involving perturbations by concave terms have been treated by Chabrowski \cite{Ch} and Chabrowski and Yang \cite{CY}. Further, critical elliptic systems under Neumann boundary condition have been considered by Chabrowski and Yang \cite{CYs}, and Kou and An \cite{Kou}. 
%for the existence of least energy solutions critical elliptic systems under Neumann boundary condition. {\color{red} (... a Pucci tem trabalhos com Neumann tb... ??)}
%Chabrowski and Yang  In \cite{CY} is treated the asymptotically linear case, while in \cite{Ch} is obtained results for concave perturbations. In \cite{CYs} critical variational elliptic system was considered. More recently, Kou and An \cite{Kou} considered an elliptic critical system involving the p-Laplacian operator.

Recently,  Ciraolo, Figalli and Roncoroni \cite{Figalli 1}, with the goal of generalizing the classification of the positive solutions of $\Delta_pu + u^{p^*-1}=0$ in the whole space to critical anisotropic $p$-Laplace equations in convex cones, have studied the problem
\begin{equation}\label{Figalli}
\left\{\begin{array}{lcl}
	\Delta ^H_pu + u^{p^*-1}=0\,\quad \mbox{in\,\,}\Sigma,\\
	u>0\,\quad \mbox{in\,\,}\Sigma,\\
	a(\nabla u)\cdot \nu =0\,\quad \mbox{on\,\,} \partial\Sigma,\\
	u\in\mathcal{D}^{1,p}(\Sigma).
\end{array}\right.
\end{equation}In \eqref{Figalli}, the operator $\Delta ^H_pu=\mbox{div\,}(a(\nabla u))$, where $a(\xi)=H^{p-1}(\xi)\nabla H(\xi)$ for all $\xi \in \R^N$, and $H(\cdot)$ is a norm in $\R^N,$ is  the \textit{anisotropic} or \textit{Finsler} $p$-\textit{Laplacian}. Further, $\Sigma$ is a convex open cone in $\R^N$ given by $\Sigma=\{tx\::\: x \in \omega, t\in (0,\infty)\}$ for some open domain $\omega\subset \mathbb{S}^{N-1}$, $\nu$ is the outward normal to $\partial\Sigma$, and $\mathcal{D}^{1,p}(\Sigma)=\{u\in L^{p^*}(\Sigma)\::\:\nabla u\in L^{p}(\Sigma)\}$. Besides a complete classification of the solutions of \eqref{Figalli}, the authors establish in \cite{Figalli 1} a corresponding sharp version of a Sobolev's inequality in cones, namely,
\begin{equation}\label{1}
	\| u\|^{p}_{L^{p^*}(\Sigma)}\le S_{\Sigma, H}^{-1}\|H(\nabla u)\|^{p}_{L^p(\Sigma)}.
\end{equation}Moreover, they prove that the extremals of \eqref{1} are of the form
\begin{equation}\label{extrem-funct}
U^{H}_{\lambda, x_0}(x):= \left(\frac{\lambda^{\frac{1}{p-1}}\left(n^{\frac{1}{p}}\left(\frac{n-p}{p-1}\right)^{\frac{p-1}{p}}\right)}{\lambda^{\frac{p}{p-1}}+\hat{H}_0(x-x_0)^{\frac{p}{p-1}}}\right)^{\frac{n-p}{p}}
\end{equation}for some $\lambda>0,$ where $\hat{H}_0(\zeta)=H_0(-\zeta),$ and $H_0$ denotes the dual norm associated to $H$, given by $H_0(\zeta)=\sup_{H(\xi)=1}\zeta\cdot\xi$ for all $\zeta\in\R^N.$ This means that, by setting 
\begin{equation}\label{2}
	S_{\Sigma, H}=\inf_{\stackrel{u \in W^{1,p}(\Sigma)}{u \neq 0}}  \frac{\|H(\nabla u)\|^{p}_{L^p(\Sigma)}}{\| u\|^{p}_{L^{p^*}(\Sigma)}},
\end{equation}one has $S_{\Sigma, H}$ is actually the best constant in \eqref{1}, which is attained by the functions $U^{H}_{\lambda, x_0}$. 

 Anisotropic or Finsler $p$-Laplacian type operators supplied with Neumann, Robin, or mixed boundary conditions have recently been studied by Ciraolo, Corso and Roncoroni \cite{CCR}, Dipierro, Poggesi and Valdinoci \cite{DipPogVal}, Montoro and Sciunzi\cite{MS}, see also the references quoted therein.
  %one has $S_{\Sigma, H}$ is attained by the functions $U^{H}_{\lambda, x_0}$.} 
%Among the results in \cite{Figalli 1}, besides a complete classification of the solutions of \eqref{Figalli},}

The aim of this paper is to extend some of the results obtained in Wang \cite{Wang} to the framework of anisotropic $p$-Laplace type operators. To be more precise, we shall study the critical problem %To be more precise, the problem we shall study is
%In this paper. we extend some results of Wang \cite{Wang} for case of anisotropic p-Laplace type operators, the word anisotropic means that the considered equation is of quasilinear type and that the gradient of the solution is measured in terms of a norm H, i.e., we are considering problems of the form 
$$\left\{\begin{array}{lcl}
	-\Delta ^H_pu+|u|^{p-2}u=\lambda|u|^{q-2}u+ |u|^{p^*-2}u\quad \mbox{in\,\,}\Omega,\\
	u>0\quad \mbox{in\,\,} \Omega,\\
	a(\nabla u)\cdot \nu =0\quad \mbox{on\,\,} \partial \Omega,
\end{array}\right. \eqno{(P)}$$ by using variational techniques, obtaining a positive solution of $(P)$ (see Theorem \ref{Main}). Regarding problem $(P)$, it is worth mentioning that Montoro and Sciunzi\cite{MS} established the Pohozaev identity for the anisotropic $p$-Laplacian, what allowed them to obtain a non-existence result for a Dirichlet counterpart to $(P)$. Further, Ciraolo, Corso and Roncoroni \cite{CCR} proved there can not exist a non-constant solution to the anisotropic $p$-Laplacian Neumann problem
$$\left\{\begin{array}{lcl}
	-\Delta ^H_pu+|u|^{p-2}u=|u|^{p^*-2}u\quad \mbox{in\,\,}\Omega,\\
	u>0\quad \mbox{in\,\,} \Omega,\\
	a(\nabla u)\cdot \nu =0\quad \mbox{on\,\,} \partial \Omega,
\end{array}\right. $$which corresponds to the case $\lambda=0$ in $(P).$ To obtain a solution of $(P)$ employing variational methods, we set the functional $J:W^{1,p}(\Omega) \rightarrow \R$ defined by % To employ variational methods in order to obtain a solution of $(P),$ 
  %%We study the existence of solutions for problem $(P),$ regarded as critical points of the energy functional associated, $J:W^{1,p}(\Omega) \rightarrow \R$ defined by
\begin{equation}\label{func-J}
J(u)=\frac{1}{p}\int_{\Omega} [H(\nabla u)^p+|u|^p]\,dx-\frac{\lambda}{q}\int_{\Omega}(u^+)^q\,dx-\frac{1}{p^*}\int_{\Omega}(u^+)^{p^*}\,dx.
\end{equation}By a (weak) solution of $(P)$ we mean a critical point of $J$, i.e., $u \in W^{1,p}(\Omega)$ satisfying
%Then, a solution of $(P)$ means a function $u \in W^{1,p}(\Omega)$ such that
%$$J'(u)v=0, \,\,\mbox{for all}\,\,v \in W^{1,p}(\Omega),$$that means
$$\int_{\Omega} \left[H(\nabla u)^{p-1}\langle \nabla H(\nabla u), \nabla v\rangle+(u^+)^{p-1}v \right]\,dx - \lambda\int_{\Omega} (u^+)^{q-1} v \,dx-\int_{\Omega}(u^+)^{p^*-1}v\,dx=0,$$
for all $v \in W^{1,p}(\Omega).$ The main result in this paper with respect to $(P)$ reads as
\begin{theorem}\label{Main}
	Let $ p \ge 2$ and N $\ge p^2$ or  $N=p^2-p+1$. Then there exists at least one non-trivial solution of $(P)$ for $\lambda$ suitably large. Moreover, $u\in C^{1,\alpha}(\Omega)$ and $u>0$ in $\Omega$. 
\end{theorem}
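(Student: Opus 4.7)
The plan is to obtain a non-trivial critical point of $J$ via the Mountain Pass Theorem and then to promote it to a positive $C^{1,\alpha}$ classical solution through regularity theory. First I would verify the mountain pass geometry for $J$: since $p<q<p^{*}$, the Sobolev embedding $W^{1,p}(\Omega)\hookrightarrow L^{q}(\Omega)\cap L^{p^{*}}(\Omega)$ combined with the equivalence of $\|H(\nabla\cdot)\|_{L^{p}}$ and the usual $p$-norm of the gradient yields $J(u)\geq \alpha>0$ on a small sphere of $W^{1,p}(\Omega)$, while $J(tu_{0})\to -\infty$ as $t\to\infty$ for any fixed $u_{0}\geq 0$, $u_{0}\not\equiv 0$, thanks to the critical term. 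This produces a $(PS)_{c_{\lambda}}$ sequence $(u_{n})$, whose boundedness follows from the standard combination of $J(u_{n})\to c_{\lambda}$ and $\langle J'(u_{n}),u_{n}\rangle=o(\|u_{n}\|)$ (using $q>p$).

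The central quantitative step is to control the min-max level $c_{\lambda}$ strictly below the compactness threshold $\frac{1}{N}\,S_{\Sigma,H}^{N/p}$. I would pick $x_{0}\in\partial\Omega\cap\partial\Sigma$ (a point where locally $\Omega$ fits the cone $\Sigma$) and test with truncations of the extremals $U^{H}_{\varepsilon,x_{0}}$ given in \eqref{extrem-funct}. Expanding $\max_{t\geq 0}J(tU^{H}_{\varepsilon,x_{0}})$ in the concentration parameter $\varepsilon$, the critical term contributes the threshold $\frac{1}{N}S_{\Sigma,H}^{N/p}$ (the full bubble lives in the cone), the lower-order $\|u\|_{L^{p}}^{p}$ term produces a positive error of the classical Brezis--Nirenberg order, and the subcritical gain coming from $\lambda\int(U^{+})^{q}$ scales as $-C\lambda\,\varepsilon^{N-q(N-p)/p}$. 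Comparing these three contributions yields the strict inequality $c_{\lambda}<\frac{1}{N}S_{\Sigma,H}^{N/p}$ for all $\lambda$ sufficiently large, under precisely the dimension restrictions $N\geq p^{2}$ or $N=p^{2}-p+1$, which are the standard Brezis--Nirenberg--Wang thresholds for the $p$-Laplacian.

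With a bounded $(PS)_{c_{\lambda}}$ sequence below the compactness level, I would extract $u_{n}\rightharpoonup u$ in $W^{1,p}(\Omega)$, $u_{n}\to u$ a.e.\ and in $L^{r}(\Omega)$ for $r<p^{*}$, and attempt to pass to the limit in $J'(u_{n})=o(1)$. This is the \emph{main obstacle}: in the anisotropic setting $a(\xi)=H^{p-1}(\xi)\nabla H(\xi)$, there is no Tartar-type inequality available as in the isotropic $p$-Laplacian, so monotonicity cannot be invoked directly to conclude $a(\nabla u_{n})\rightharpoonup a(\nabla u)$. To bypass this, I would follow the Boccardo--Murat strategy and prove $\nabla u_{n}\to\nabla u$ a.e.\ in $\Omega$, testing the equation with cutoffs of $(u_{n}-u)$ and exploiting the strict convexity and $C^{1}$ regularity of $\xi\mapsto H(\xi)^{p}$ in $\R^{N}\setminus\{0\}$. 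Once pointwise convergence of gradients is secured, a Brezis--Lieb splitting together with a Lions concentration-compactness analysis rules out mass concentration (since the energy lies strictly below the threshold), so $u_{n}\to u$ strongly in $W^{1,p}(\Omega)$ and $u$ is a nontrivial critical point of $J$.

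Finally, testing with $u^{-}$ yields $u\geq 0$ and $u\not\equiv 0$. The $L^{\infty}(\Omega)$ bound granted by the authors' own boundedness result for critical anisotropic Neumann solutions (cited in the abstract) together with the $C^{1,\alpha}$ regularity theory for anisotropic quasilinear operators provides $u\in C^{1,\alpha}(\Omega)$; an anisotropic Harnack inequality then rules out interior zeros, giving $u>0$ in $\Omega$ and completing the proof.
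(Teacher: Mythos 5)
Your overall architecture (mountain pass geometry, bounded $(PS)$ sequences, Boccardo--Murat a.e.\ gradient convergence to bypass the missing Tartar inequality, then $L^\infty$ bound, $C^{1,\alpha}$ regularity and Harnack) matches the paper. The genuine gap is in the critical-level analysis. For a \emph{Neumann} problem a $(PS)$ sequence can concentrate at a point of $\partial\Omega$, carrying only \emph{half} of a bubble's energy; this is exactly the content of the paper's Lemma \ref{L21}, which shows that the local Sobolev constant near a flat boundary point is $2^{-p/N}S_{\Sigma,H}$, and of the contradiction argument in Lemma \ref{RM1}, which consequently places the compactness threshold at $\frac{1}{2N\tilde{C}}S_{\Sigma,H}^{N/p}$, not at $\frac{1}{N}S_{\Sigma,H}^{N/p}$ as you claim. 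Proving $c_\lambda<\frac{1}{N}S_{\Sigma,H}^{N/p}$ does not exclude boundary concentration at a level near $\frac{1}{2N}S_{\Sigma,H}^{N/p}$, so your compactness step fails.

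Correspondingly, your test-function bookkeeping is off: a bubble centered at $x_0\in\partial\Omega$ has only (roughly) half its mass in the bounded domain $\Omega$, so the critical term contributes $\frac{1}{N}\bigl(2^{-p/N}S_{\Sigma,H}\bigr)^{N/p}=\frac{1}{2N}S_{\Sigma,H}^{N/p}$ at leading order — exactly the threshold — and the strict inequality must come from the next-order corrections. In the paper these are the boundary-geometry terms $I_\varepsilon$ and $II_\varepsilon$ of Lemma \ref{Ident}: the case $N\geq p^2$ reduces to the comparison $\lim_{\varepsilon\to0}I_\varepsilon/II_\varepsilon>\frac{(N-p)K_1}{NK_2}$, verified by the explicit integral identities \eqref{int-est1}--\eqref{int-est3}, while the dimension restrictions $N\geq p^2$ or $N=p^2-p+1$ enter to make $K_3(\varepsilon)=\int_\Omega u_\varepsilon^p$ of order no worse than $\varepsilon^{(p-1)/p}$. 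This is Wang's curvature mechanism for Neumann problems, not the pure Brezis--Nirenberg competition between the $L^p$ and $\lambda L^q$ terms of a full interior bubble that you describe. To repair your argument you must (i) lower the target threshold to $\frac{1}{2N\tilde{C}}S_{\Sigma,H}^{N/p}$, (ii) expand $K_1(\varepsilon)$ and $K_2(\varepsilon)$ as half of the full-space quantities minus the curvature corrections, and (iii) carry out the $I_\varepsilon/II_\varepsilon$ comparison (or, in the case $N=p^2-p+1$, the logarithmic gain $C\varepsilon^{(p-1)/p}|\ln\varepsilon|$) to obtain the strict inequality.
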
An important step in the proof is to show that the weak limit of a $(PS)$ sequence is a critical point of $J.$ If the right-hand side of $(P)$ gives rise to the isotropic norm $(\textstyle\int_{\Omega}[|\nabla u|^p+|u|^p]dx)^{1/p}$ in $W^{1,p}(\Omega),$ i.e., if $-\Delta ^H_p$ is the standard $p$-Laplace, the so called Tartar's type inequalities (see \cite{Simo}) are a very helpful tool to accomplish such a step, see \cite{Wang,Wang2}. That seems not be the case for general anisotropic norms $(\textstyle\int_{\Omega}[H^p(\nabla u)+|u|^p]dx)^{1/p}$ in $W^{1,p}(\Omega).$ Actually, to our knowledge there is no Tartar's type inequalities available to the anisotropic $p$-Laplace operator $-\Delta ^H_p.$ To overcome this issue, we first establish the strict monotonicity of $-\Delta ^H_p$ (see Proposition \ref{H monot}). So we use Boccardo-Murat's \cite{Boccardo} result to set the almost everywhere gradient convergence of a weakly convergent $(PS)$ sequence of $J$, and obtain a mountain pass critical point $u$ of $J.$ Then we adapt the approach in Wang \cite{Wang} to determine the energy sub-levels of $J$ ensuring compactness, which the mountain pass level of $u$ belongs to, so $u$ is a weak solution of $(P)$. Since $u$ is bounded, what is established in Theorem \ref{thm-L-infinito} below, one can invoke elliptic regularity theory and a Harnack inequality to conclude $u\in C^{1,\alpha}(\Omega)$ is a positive solution of $(P)$. %The boundedness of $u$, established using Theorem \ref{thm-L-infinito}, allows to invoke elliptic regularity theory and a Harnack inequality to conclude $u\in C^{1,\alpha}(\Omega)$ is a positive solution of $(P)$.  %Then we adapt the approach in Wang \cite{Wang} to determine the energy sub-levels of $J$ ensuring compactness, which the mountain pass level of $u$ belongs to, and $u$ is a positive solution of $(P)$. 

This paper is organized as follows. In section 2 we set the framework and derive some properties of the anisotropic $p$-Laplace operator. Indeed, we establish its strict monotonicity, and a convergence result for the gradients of weak solutions of the equation in $(P)$. Furthermore, we prove the $L^\infty$-regularity of solutions of a general class of critical Neumann problems involving anisotropic $p$-Laplace on bounded smooth (Lipschitz) domains. In Section 3 we prove several preparatory lemmas in order to, in Section 4, complete the proof of Theorem \ref{Main}.
%%%%%%%%{\color{red}This paper is organized as follows. In section 2 we describe some properties of the function $H$ and prove a monotonicity result for that function. In section 3 we prove the almost everywhere convergence of the gradients. Finally,  in the section 4 we use variational methods to prove the main Theorem.}

\section{The anisotropic $p$-Laplace operator $-\Delta ^H_p$: framework and some properties}%%%%%{On the anisotropic $p$-Laplace operator $-\Delta ^H_p.$}%%%%{Some properties of the anisotropic $p$-Laplace operator $-\Delta ^H_p.$}%{Preliminaries}
\subsection{Framework and strict monotonicity}
In this section we set the framework for the anisotropic $p$-Laplace operator $-\Delta ^H_p$, and some related properties. For additional details, we refer \cite{B,CMV1,CMV2,{Fari-Vald},F,WX} and the references therein. Let $H:\R^N \rightarrow \R$ be a non-negative convex even function of class $C^2(\R^N\setminus  \{0\}),$ satisfying
%%%%%%%%In this section we present the main properties of function $H:\R^N \rightarrow \R$, for more details about properties of $H$ we invite the reader to see \cite{B,F,WX} and the references there in. Let $H$ be a nonnegative convex function of class $C^2(\R^N\setminus  \{0\})$ which is even and such that

\bigskip

\noindent $(H_1)$ $H(t\xi)=|t|H(\xi), \, \forall\, t\in\, \R\,\,\mbox{and}\,\,\forall\,\xi\, \in\, \R^N,$ 

\bigskip

%\noindent  and

\noindent $(H_2)$ $H(\xi)>0, \, \forall\,\xi\neq 0.$

\bigskip

 By homogeneity, there exist constants $0<C_1 \le C_2<\infty$ with
\begin{equation}\label{norm-eqv}
C_1|\xi|\le H(\xi) \le C_2 |\xi|,\ \forall\,\xi\, \in\, \R^N.
\end{equation}We will also assume 

\bigskip

\noindent $(H_3)$ $\{\xi \in \R^N; H(\xi) < 1\}$ is uniformly convex;

\medskip

\noindent where uniform convexity means the principal curvatures of the boundary are positive
and bounded away from zero.
%$Hess(H^2)$ is positive definite in $\R^N \setminus \{0\}.$

\bigskip

From \cite{CMV2} (Proposition 3.1) we have $(H_3)$ is equivalent, for any $p>1$, to $H^p$ to be strictly convex, and there exist positive constants $\gamma, \Gamma$ such that
$$(Hess(H^p)(\xi))_{ij}\zeta_i\zeta_j\ge \gamma|\xi|^{p-2}|\zeta|^2,\qquad \sum_{i,j=1}^{N}\left|(Hess(H^p)(\xi))_{ij}\right|\le \Gamma|\xi|^{p-2}, $$
for any $\xi \in \R^N\setminus\{0\}$ and $\zeta\in \R^N.$ Let $H_0:\R^N \rightarrow \R$ be given by
$$H_0(x)= \sup_{\{\xi :H(\xi)\leq1\}}\langle x, \xi\rangle.$$
It holds that $H_0\in C^2(\R^N\setminus \{0\})$ is a convex, non-negative, homogeneous function. The function $H_0$ is dual of $H$ in the sense that
$$H_0(x)=\sup_{\xi \neq 0}\frac{\langle x,\xi\rangle}{H(\xi)}\quad\mbox{and}\quad H(x)=\sup_{\xi \neq 0}\frac{\langle x,\xi\rangle}{H_0(\xi)}.$$
In next lemma we collect some properties of  $H$ that will be useful along the paper.
%In the next Lemma we give some properties of the function $H$ that will be important along of the paper.
\begin{lemma}[See \cite{Zhou-zhou}]\label{H-prop}
For all $x,y,\xi \in\mathbb{R}^N, t\in\mathbb{R},$ the following assertions hold.
	\begin{enumerate}
		\item[(i)] $|H(x)-H(y)|\le H(x+y) \le H(x)+H(y);$
		\item[(ii)] $\frac{1}{C}\le |\nabla H(x)|\le C,$ and $\frac{1}{C}\le |\nabla H_0(x)|\le C$ for some $C>0$ and any $x \neq 0;$
		\item[(iii)] $\langle x,\nabla H(x)\rangle=H(x),$ $\langle x,\nabla H_0(x)\rangle=H_0(x)$ for any $x \neq 0;$
		\item[(iv)] $H(\nabla H_0(x))=1,$ $H_0(\nabla H(x))=1$ for any $x \neq 0;$
		\item[(v)] $H_0(x)H_{\xi}(\nabla H_0(x))=x$ for any  $x \neq 0;$
		\item[(vi)] $H_{\xi}(t \xi)=sgn(t)H_{\xi}( \xi)$ for any  $\xi \neq 0$ and $t \neq 0$.
	\end{enumerate}
\end{lemma}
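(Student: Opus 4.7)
The lemma collects six elementary properties of the anisotropic norm $H$ and its polar dual $H_0$; each item reduces to homogeneity, convexity, and Legendre duality, so I would just verify them one by one.

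For (i), the right inequality is the standard fact that a $1$-homogeneous convex function is subadditive: by convexity $H(\tfrac{x+y}{2})\le\tfrac12(H(x)+H(y))$ and by $(H_1)$ the left side equals $\tfrac12 H(x+y)$. The left inequality then follows from evenness of $H$: writing $x=(x+y)+(-y)$ and using subadditivity plus $H(-y)=H(y)$ yields $H(x)-H(y)\le H(x+y)$, and symmetrically in $x,y$. For (vi), I would differentiate the identity $H(t\xi)=|t|H(\xi)$ in $\xi$, obtaining $t\,\nabla H(t\xi)=|t|\nabla H(\xi)$, i.e.\ $\nabla H(t\xi)=\mathrm{sgn}(t)\nabla H(\xi)$; in particular $\nabla H$ is $0$-homogeneous on $\R^N\setminus\{0\}$. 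Item (iii) is Euler's identity: differentiating $H(t\xi)=tH(\xi)$ in $t$ at $t=1$ gives $\langle \xi,\nabla H(\xi)\rangle=H(\xi)$, and the same argument applies to $H_0$.

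For (ii), the $0$-homogeneity of $\nabla H$ from (vi) reduces the estimate to the unit sphere $\mathbb{S}^{N-1}$, where $\nabla H$ is continuous (by the $C^2$ hypothesis away from the origin). The upper bound is just the maximum of $|\nabla H|$ on $\mathbb{S}^{N-1}$. For the lower bound, note that if $\nabla H(x)=0$ at some $x\ne 0$, then by (iii) one would have $H(x)=\langle x,\nabla H(x)\rangle=0$, contradicting $(H_2)$; thus $|\nabla H|$ is strictly positive on the compact set $\mathbb{S}^{N-1}$ and attains a positive minimum. The same argument applied to $H_0$ gives the dual bounds. For (iv), I would use the dual representation $H_0(\eta)=\sup_{H(y)\le 1}\langle \eta,y\rangle$. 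Taking $\eta=\nabla H(x)$ and using convexity of $H$ in the form $H(y)\ge H(x)+\langle \nabla H(x),y-x\rangle$ together with (iii) gives $\langle \nabla H(x),y\rangle \le H(y)\le 1$ for all $y$ with $H(y)\le 1$, hence $H_0(\nabla H(x))\le 1$. The reverse inequality is achieved at $y=x/H(x)$, where $\langle \nabla H(x),x/H(x)\rangle=1$ by (iii); so $H_0(\nabla H(x))=1$. The symmetric statement $H(\nabla H_0(x))=1$ follows by swapping the roles of $H$ and $H_0$ (their biduality is standard under $(H_1)$–$(H_3)$).

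Item (v) is the most substantive and is where I expect the mild technical care to lie. My approach is Legendre duality applied to $\tfrac12 H^2$ and $\tfrac12 H_0^2$, which are dual convex functions. Compute the gradients: $\nabla(\tfrac12 H^2)(\xi)=H(\xi)\nabla H(\xi)$ and $\nabla(\tfrac12 H_0^2)(x)=H_0(x)\nabla H_0(x)$. Legendre duality says these gradient maps are mutual inverses on $\R^N\setminus\{0\}$, so with $u:=H_0(x)\nabla H_0(x)$ one has $H(u)\nabla H(u)=x$. Using the $1$-homogeneity of $H$ and the $0$-homogeneity of $\nabla H$ along with (iv) applied to $H_0$,
\[
H(u)=H_0(x)\,H(\nabla H_0(x))=H_0(x),\qquad \nabla H(u)=\nabla H(\nabla H_0(x)),
\]
yielding $x=H_0(x)\,\nabla H(\nabla H_0(x))$, which is (v) with the notation $H_\xi=\nabla H$. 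The only real obstacle is justifying the Legendre inversion; I would either invoke the standard Fenchel duality theorem (applicable because $\tfrac12 H^2$ is strictly convex by $(H_3)$, coercive, and $C^1$) or, alternatively, derive (v) directly by differentiating the identity $H_0(x)=\langle x, \nabla H_0(x)/H(\nabla H_0(x))\rangle=\langle x,\nabla H_0(x)\rangle$ combined with (iii) and (iv). Either route is short once $(H_3)$ is invoked to rule out degeneracy.
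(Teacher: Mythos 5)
Your proof is correct, but note that the paper does not actually prove this lemma: it is imported verbatim from Zhou--Zhou \cite{Zhou-zhou} with no argument given, so any self-contained verification is already ``a different route.'' Your derivations of (i), (ii), (iii), (vi) via subadditivity of a $1$-homogeneous convex function, evenness, Euler's identity, and $0$-homogeneity of $\nabla H$ are exactly the standard ones and are complete; for (iv) the subgradient inequality $\langle\nabla H(x),y\rangle\le H(y)$ combined with the extremizer $y=x/H(x)$ is the right argument, with the symmetric statement resting on the bipolar identity $(H_0)_0=H$, which holds here since $H$ is a norm by $(H_1)$--$(H_2)$ and \eqref{norm-eqv}. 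The only substantive item is (v), and your Legendre-duality argument for the pair $\tfrac12H^2$, $\tfrac12H_0^2$ is legitimate: $(H_3)$ gives strict convexity of $H^2$ (the paper records this equivalence from \cite{CMV2}), $\tfrac12H^2$ is coercive and $C^1$, so the gradient maps $\xi\mapsto H(\xi)\nabla H(\xi)$ and $x\mapsto H_0(x)\nabla H_0(x)$ are mutually inverse, and the homogeneity bookkeeping you carry out then yields $x=H_0(x)\nabla H(\nabla H_0(x))$. One caveat: the ``alternative'' route you sketch at the end --- differentiating $H_0(x)=\langle x,\nabla H_0(x)\rangle$ in $x$ --- does not produce (v); it only returns $D^2H_0(x)\,x=0$, i.e.\ the $0$-homogeneity of $\nabla H_0$, so that remark should be dropped, but since it is offered only as an alternative the proof as a whole stands.
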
 

\noindent Under the hypotheses above, we consider for $p\geq2$ the anisotropic or Finsler $p$-Laplacian 
\begin{equation}\label{def-op-anisot}
\Delta ^H_pu=\mbox{div\,}(a(\nabla u))
\end{equation}where $a(\xi)=H^{p-1}(\xi)\nabla H(\xi)$ for all $\xi \in \R^N$, and $H$ satisfies $(H_1)-(H_3)$. Next result shows that $-\Delta ^H_p$ is strictly monotone, a convenient property not only for the purposes of this work. %an aspect which is useful %what is relevant to 
%We  finish this section proving a monotonicity property related to function $H$.
\begin{proposition}\label{H monot}
Let $p > 1$ and let $H:\R^N \rightarrow \R$ be a non-negative convex function of class $C^2(\R^N\setminus \{0\})$ satisfying $(H_1)-(H_2)$. Then there holds
\begin{equation}\label{monot}
	\langle H^{p-1}(\xi)\nabla H(\xi)- H^{p-1}(\eta)\nabla H(\eta),\,\xi-\eta\rangle>0,\quad\mbox{for all}\quad \xi\neq\eta.
\end{equation}
\end{proposition}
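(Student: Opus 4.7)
My plan uses the potential structure $a(\xi) = \frac{1}{p}\nabla(H^p)(\xi)$, so that strict monotonicity of $a$ reflects strict convexity of $F := H^p$. I would prove the inequality directly through the two identities of Lemma~\ref{H-prop}, rather than invoking convexity abstractly, since this keeps the role of strict convexity of the unit ball transparent and avoids passing through Hessians.

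The first step is an algebraic expansion. Using $\langle \nabla H(\xi), \xi\rangle = H(\xi)$ from Lemma~\ref{H-prop}(iii), for nonzero $\xi, \eta$ I rewrite
\[
\langle a(\xi) - a(\eta), \xi - \eta\rangle = H^p(\xi) + H^p(\eta) - H^{p-1}(\xi) \langle \nabla H(\xi), \eta\rangle - H^{p-1}(\eta) \langle \nabla H(\eta), \xi\rangle.
\]
(If $\xi = 0 \neq \eta$, set $a(0) := 0$; the expression equals $H^p(\eta) > 0$ by $(H_2)$ and we are done, and analogously for $\eta = 0$.) Next I apply the dual-norm inequality: since $H_0(\nabla H(\xi)) = 1$ by Lemma~\ref{H-prop}(iv), the defining property $\langle y, z\rangle \leq H(y) H_0(z)$ of the dual norm yields
\[
\langle \nabla H(\xi), \eta\rangle \leq H(\eta), \qquad \langle \nabla H(\eta), \xi\rangle \leq H(\xi).
\]
Substituting into the expansion gives
\[
\langle a(\xi) - a(\eta), \xi - \eta\rangle \geq \bigl(H(\xi) - H(\eta)\bigr)\bigl(H^{p-1}(\xi) - H^{p-1}(\eta)\bigr) \geq 0,
\]
the last step by strict monotonicity of $t \mapsto t^{p-1}$ on $[0,\infty)$ for $p > 1$.

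It remains to upgrade the inequality to a strict one when $\xi \neq \eta$. If $H(\xi) \neq H(\eta)$, the bound is already strictly positive. The remaining case is $H(\xi) = H(\eta) =: r$ with $r > 0$; here vanishing of the inner product would force both dual inequalities to be equalities, so that the level set $\{H = r\}$ would be supported by the common hyperplane $\{z : \langle \nabla H(\xi), z\rangle = r\}$ at both $\xi$ and $\eta$. By the $C^2$ regularity of $H$ off the origin the supporting hyperplane at each point of $\{H = r\}$ is unique, hence $\nabla H(\xi) = \nabla H(\eta)$; invoking the strict convexity of the unit ball of $H$ (guaranteed in the framework by $(H_3)$, equivalent to strict convexity of $H^p$ via the cited \cite{CMV2}) then forces $\xi = \eta$, a contradiction.

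The main obstacle is precisely this equal-norm case: the duality estimate alone gives only $\geq 0$, and one must appeal to the absence of flat pieces in the sublevel sets of $H$. This is exactly where the framework hypothesis $(H_3)$ enters essentially, even though it is not listed in the statement — it is what upgrades the monotonicity of $a$ from non-strict to strict, by ensuring the bijectivity of the Gauss map $\nabla H: \{H = r\} \to \{H_0 = 1\}$ which the final contradiction invokes.
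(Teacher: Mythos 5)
Your proof is correct, but it proceeds quite differently from the paper's. The paper's argument is second-order: it writes $\langle a(\xi)-a(\eta),\xi-\eta\rangle=\frac{1}{p}\int_0^1\langle \mathrm{Hess}(H^p)(\xi+t(\eta-\xi))(\xi-\eta),\xi-\eta\rangle\,dt$ and invokes the coercivity of $\mathrm{Hess}(H^p)$ away from the origin (the quantitative consequence of $(H_3)$ quoted from \cite{CMV2}), noting that the segment meets the origin at most once so the integral is strictly positive. Your argument is first-order: Euler's identity plus the duality bound $\langle\nabla H(\xi),\eta\rangle\le H(\eta)H_0(\nabla H(\xi))=H(\eta)$ gives the clean lower bound $(H(\xi)-H(\eta))(H^{p-1}(\xi)-H^{p-1}(\eta))\ge 0$ using only $(H_1)$--$(H_2)$, and strict convexity of the sublevel sets is needed only to rule out the degenerate equal-norm case. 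What your route buys is twofold: it isolates exactly where $(H_3)$ is used (you are right that it is needed but absent from the statement's hypotheses --- the paper's own proof also silently uses it), and it sidesteps the regularity of $\mathrm{Hess}(H^p)$ along the segment near the origin, which for $1<p<2$ is a genuine delicacy in the paper's computation. What the paper's route buys is a quantitative lower bound (via the constant $\gamma$ in the Hessian estimate) rather than bare positivity, which is sometimes useful downstream. Two cosmetic remarks: in the equal-norm case you only need injectivity, not bijectivity, of the Gauss map; and the shortest way to finish there is to observe that equality in both dual pairings places the whole segment $[\xi,\eta]$ in a supporting hyperplane of the convex body $\{H\le r\}$, hence in its boundary, contradicting strict convexity directly.
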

\begin{proof}Since $
	\nabla(H^p(\xi))=pH^{p-1}(\xi)\nabla H(\xi) $ for all $\xi \in\mathbb{R}^N,$ it follows that
	%Initially, observe that$$	\nabla(H^p(\xi))=pH^{p-1}(\xi)\nabla H(\xi), $$
$$
\begin{aligned}
	\langle H^{p-1}(\xi)\nabla H(\xi)- H^{p-1}(\eta)\nabla H(\eta),\,\xi-\eta\rangle =\, &\frac{1}{p}\sum_{i=1}^{N}\left( \partial_{\xi_i}(H^p(\xi))-\partial_{\xi_i}(H^p(\eta))\right)(\xi_i-\eta_i)\\
	= &-\frac{1}{p}\sum_{i=1}^{N}\int_{0}^{1}\frac{d}{dt}\partial_{\xi_i}(H^p(\xi+t(\eta-\xi)))dt(\xi_i-\eta_i).
	%%%%%= &-\frac{1}{p}\int_{0}^{1}\sum_{i=1}^{N}\left(\sum_{j=1}^{N}\partial^2_{x_jx_i}(H^p)(\xi+t(\eta-\xi))(\eta_j-\xi_j)\right)dt(\xi_i-\eta_i)\\
\end{aligned}
$$Therefore we have

$$\begin{aligned}
	\langle H^{p-1}(\xi)\nabla H(\xi)- H^{p-1}(\eta)&\nabla H(\eta),\,\xi-\eta\rangle\\ &=-\frac{1}{p}\int_{0}^{1}\sum_{i=1}^{N}\left(\sum_{j=1}^{N}\partial^2_{\xi_j\xi_i}(H^p)(\xi+t(\eta-\xi))(\eta_j-\xi_j)\right)dt(\xi_i-\eta_i)\\
	&=\frac{1}{p}\int_{0}^{1}\sum_{i,j=1}^{N}\partial^2_{\xi_j\xi_i}(H^p)(\xi+t(\eta-\xi))(\xi_j-\eta_j)(\xi_i-\eta_i)dt.
\end{aligned}
$$Now set the function $g:[0,1]\rightarrow\mathbb{R}$ defined by
	$$g(t)=\frac{1}{p}\int_{0}^{1}\sum_{i,j=1}^{N}\partial^2_{\xi_j\xi_i}(H^p)(\xi+t(\eta-\xi))(\xi_j-\eta_j)(\xi_i-\eta_i)dt.$$
Since the line $\xi+t(\eta-\xi)$ passes through the origin at most one time for all $\xi \neq \eta$, by $(H_3)$ we must have $\textstyle\int_{0}^{1}g(t)dt > 0.$ Hence \eqref{monot} holds, and the proof is complete.
\end{proof}

\subsection{Almost Everywhere Convergence of the gradients}

The following result, due to Boccardo and Murat \cite{Boccardo}, will be important along the proof of Theorem \ref{Main}.
\begin{lemma}[See \cite{Boccardo}]\label{Bo1}
	Let $a:\Omega \times \R\times \R^N \rightarrow \R^N$ is a Carathéodory function satisfying
	\begin{eqnarray}
	&|a(x,s,\zeta)|\le c(x)+k_1|s|^{p-1}+k_2|\zeta|^{p-1}, \label{Bo2}\\
	&\left[a(x,s,\zeta)-a(x,s,\eta)\right]\left[\zeta-\eta\right]>0, \,\,\,\,\,\,\,\,\,\,\,\,\,\,\,\, \label{Bo3}\\
&\frac{a(x,s,\zeta)\zeta}{|\zeta|+|\zeta|^{p-1}}\to +\infty,\,\,\mbox{as}\,\,|\zeta|\to \infty, \,\,\,\,\,\,\,\,\,\,\,\,\,\,\,\,\,\,\,\,\label{Bo4}
	\end{eqnarray}
for a.e. $x \in \Omega$, for all $s\in \R$, for all $\zeta,\eta \in \R^N$ with $\zeta\neq\eta,$ where $c(x)$ belongs to $L^{p'}(\Omega),$ $c\ge 0,$ and $k_1,k_2 \in \R^{+}.$ Suppose %Consider the nonlinear elliptic class of equations
\begin{equation}\label{Bo5}
	-div(a(x,u_{n},\nabla u_{n}))=f_n+g_n,\,\,\mbox{in}\,\,\mathscr{D}'(\Omega),
\end{equation} where:
%and assume that
\begin{eqnarray}
	u_n \rightharpoonup u\,\,\mbox{weakly in}\,\,W^{1,p}(\Omega),\,\,\mbox{strongly in}\,\,L^{p}_{loc}(\Omega)\,\,\mbox{and}\,\,a.e.\,\,\mbox{in}\,\,\Omega; \label{Bo6}\\
	f_n \rightarrow f\,\,\mbox{strongly in}\,\,W^{-1,p'}(\Omega);\\
	|\langle g_n,\varphi\rangle|\le C_k \|\varphi\|_{L^{\infty}(\Omega)}\,\,\mbox{for any}\,\,\varphi \in \mathscr{D}(\Omega)\,\,\mbox{with}\,\,\mbox{\textnormal{supp\,}}(\varphi)\subset K, \label{Bo7}
\end{eqnarray}
where $C_k$ is a constant which depends on the compact set $K$.
Then,
\begin{equation}\label{Bo8}
	\nabla u_n \rightarrow \nabla u\,\,\mbox{strongly in}\,\,(L^q(\Omega))^{N}\,\,\mbox{for any}\,\,q<p.
\end{equation}As a consequence, passing to a subsequence, one has 
\begin{equation}\label{Bo-ae}
	\nabla u_n \rightarrow \nabla u\,\,\mbox{a.e. in}\,\,\Omega.
\end{equation}
\end{lemma}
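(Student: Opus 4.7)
The plan is to reduce \eqref{Bo8} to showing that $\nabla u_n \to \nabla u$ in measure on every compact $K\subset \Omega$. Once this is established, the $L^p$-boundedness of $\{\nabla u_n\}$ (coming from \eqref{Bo6}) and Vitali's convergence theorem immediately upgrade convergence in measure to strong convergence in $L^q(K)^N$ for every $q<p$, and extraction of a further subsequence yields \eqref{Bo-ae}. The heart of the matter is therefore the pointwise-in-measure convergence of the gradients.

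The key technical step is to prove that
$$
X_n(x):=\bigl[a(x,u_n,\nabla u_n)-a(x,u_n,\nabla u)\bigr]\cdot(\nabla u_n-\nabla u)\longrightarrow 0\quad\text{in }L^1_{\mathrm{loc}}(\Omega),
$$
because by the strict monotonicity \eqref{Bo3}, a standard Leray-Lions/Minty type argument turns $L^1_{\mathrm{loc}}$-convergence of $X_n$ into convergence in measure of $\nabla u_n-\nabla u$ on compact sets. To handle $X_n$, I would fix a nonnegative cutoff $\phi\in\mathscr{D}(\Omega)$ supported in a chosen compact $K$, a truncation level $k>0$, and use as test function in \eqref{Bo5} the bounded Sobolev function $v_n:=\phi\,T_k(u_n-u)$, where $T_k(s)=\max\{-k,\min\{k,s\}\}$, so that $\|v_n\|_{L^\infty(\Omega)}\le k\|\phi\|_\infty$.

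On the right-hand side, $\langle f_n,v_n\rangle\to 0$ as $n\to\infty$ because $T_k(u_n-u)\to 0$ strongly in $L^p_{\mathrm{loc}}(\Omega)$ with gradient converging weakly to $0$ in $L^p(\Omega)$, while $f_n\to f$ in $W^{-1,p'}(\Omega)$; and by \eqref{Bo7} one has $|\langle g_n,v_n\rangle|\le C_K\|v_n\|_\infty\le C_K k\|\phi\|_\infty$. On the left-hand side, expanding
$$
\nabla v_n=\phi\,\nabla T_k(u_n-u)+T_k(u_n-u)\,\nabla\phi,
$$
the contribution from $\nabla\phi$ vanishes in the limit since $a(x,u_n,\nabla u_n)$ is bounded in $L^{p'}(\Omega)$ by \eqref{Bo2} and $T_k(u_n-u)\to 0$ in $L^p_{\mathrm{loc}}$. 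Subtracting the auxiliary term $\int\phi\,a(x,u_n,\nabla u)\cdot\nabla T_k(u_n-u)\,dx$, which tends to $0$ because $a(\cdot,u_n,\nabla u)\to a(\cdot,u,\nabla u)$ in $L^{p'}_{\mathrm{loc}}(\Omega)$ by dominated convergence (using \eqref{Bo2} and \eqref{Bo6}) while $\nabla T_k(u_n-u)\rightharpoonup 0$ weakly in $L^p(\Omega)$, one arrives at
$$
\limsup_{n\to\infty}\int_\Omega \phi\,[a(x,u_n,\nabla u_n)-a(x,u_n,\nabla u)]\cdot\nabla T_k(u_n-u)\,dx\le C_K k\|\phi\|_\infty.
$$
The integrand is nonnegative by \eqref{Bo3}, and on the set $\{|u_n-u|\le k\}$ coincides with $\phi\,X_n$; a Hölder splitting combined with the equi-integrability of $|\nabla u_n|^p$ handles the complementary set, whose measure tends to $0$. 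Letting $n\to\infty$ and then $k\to 0$ gives $X_n\to 0$ in $L^1(K)$.

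The main obstacle is the term involving $g_n$: because \eqref{Bo7} only provides an $L^\infty$-type dual bound (and not convergence in $W^{-1,p'}$), the usual weak-against-weak passage to the limit fails. The truncation $T_k$ is precisely what circumvents this obstruction, since $\|v_n\|_\infty\le k\|\phi\|_\infty$ makes $\langle g_n,v_n\rangle$ small as $k\to 0$, decoupled from $n$. A second, more subtle, technical point is ensuring that the Carathéodory composition $a(x,u_n,\nabla u)$ converges strongly in $L^{p'}_{\mathrm{loc}}$; this step uses crucially the strong $L^p_{\mathrm{loc}}$-convergence and a.e. convergence of $u_n$ from \eqref{Bo6} together with the growth bound \eqref{Bo2}, and it is here that the Carathéodory assumption on $a$ is indispensable.
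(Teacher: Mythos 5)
The paper offers no proof of this lemma; it is imported verbatim from Boccardo--Murat \cite{Boccardo}, so the only meaningful comparison is with the argument in that reference --- and your proposal is essentially a faithful reconstruction of it: test the equation with $\phi\,T_k(u_n-u)$, use the bound $\|\phi\,T_k(u_n-u)\|_{L^\infty}\le k\|\phi\|_{L^\infty}$ to tame the $g_n$-term via \eqref{Bo7}, pass to the limit in the remaining terms, and convert the smallness of the monotonicity quantity $X_n$ into convergence in measure of the gradients, whence \eqref{Bo8} by Vitali and \eqref{Bo-ae} by subsequence extraction. One step, as written, would fail and needs the standard repair. You assert $X_n\to0$ in $L^1(K)$, handling the bad set $\{|u_n-u|>k\}$ by a ``H\"older splitting combined with the equi-integrability of $|\nabla u_n|^p$''; but $|\nabla u_n|^p$ is only \emph{bounded} in $L^1(\Omega)$, not equi-integrable (such equi-integrability is not available from \eqref{Bo6}), so $\int_{\{|u_n-u|>k\}\cap K}X_n\,dx$ cannot be made small merely because that set has small measure, and genuine $L^1(K)$-convergence of $X_n$ does not follow. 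The fix --- which is exactly what Boccardo and Murat do --- is to work with $\int_K X_n^{\theta}\,dx$ for a fixed $\theta\in(0,1)$: H\"older gives $\int_E X_n^{\theta}\,dx\le\big(\int_K X_n\,dx\big)^{\theta}|E|^{1-\theta}$, so boundedness of $X_n$ in $L^1(K)$ suffices on the bad set, while your estimate controls the good set. This yields $X_n^{\theta}\to0$ in $L^1(K)$, hence $X_n\to0$ a.e.\ along a subsequence, which is all that the Minty/Leray--Lions pointwise argument (together with \eqref{Bo3} and \eqref{Bo4}) requires to conclude $\nabla u_n\to\nabla u$ a.e., and the rest of your argument goes through.
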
 To apply previous lemma to problem $(P)$ we need to prove the following
%Now, we apply the previous theorem to operator $a(\xi)=H^{p-1}(\xi)\nabla H(\xi)$, for such intent in the next proposition we show that operator verify $(\ref{Bo2})-(\ref{Bo4})$.

\begin{lemma}\label{Bo9}
	The operator $\Delta ^H_p$ satisfies $(\ref{Bo2})-(\ref{Bo4})$.
\end{lemma}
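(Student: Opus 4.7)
\medskip

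The plan is to verify the three structural conditions \eqref{Bo2}, \eqref{Bo3}, \eqref{Bo4} directly for the Carathéodory map $a(x,s,\zeta) = H^{p-1}(\zeta)\nabla H(\zeta)$, which in our setting depends on $\zeta$ alone. Each condition will follow quickly from the norm equivalence \eqref{norm-eqv}, Lemma \ref{H-prop}, and the strict monotonicity already established in Proposition \ref{H monot}.

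For \eqref{Bo2}, I will use the $1$-homogeneity in $(H_1)$ together with \eqref{norm-eqv} to bound $H^{p-1}(\zeta) \le C_2^{p-1}|\zeta|^{p-1}$, and Lemma \ref{H-prop}(ii) to bound $|\nabla H(\zeta)| \le C$. Multiplying gives $|a(\zeta)| \le C_2^{p-1}C\,|\zeta|^{p-1}$, so \eqref{Bo2} holds with $c\equiv 0$, $k_1=0$, $k_2=C_2^{p-1}C$.

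For \eqref{Bo3}, since $a(x,s,\zeta)-a(x,s,\eta) = H^{p-1}(\zeta)\nabla H(\zeta) - H^{p-1}(\eta)\nabla H(\eta)$, this is exactly the inequality \eqref{monot} of Proposition \ref{H monot}, and hence holds for every $\zeta\neq\eta$.

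The only genuine calculation is \eqref{Bo4}. Using Lemma \ref{H-prop}(iii) (Euler's identity $\langle \zeta,\nabla H(\zeta)\rangle = H(\zeta)$) we get the clean identity
\begin{equation*}
a(\zeta)\cdot\zeta \;=\; H^{p-1}(\zeta)\,\langle \nabla H(\zeta),\zeta\rangle \;=\; H^{p}(\zeta) \;\ge\; C_1^{p}\,|\zeta|^{p},
\end{equation*}
by \eqref{norm-eqv}. It remains to check that $C_1^{p}|\zeta|^{p}/(|\zeta|+|\zeta|^{p-1})\to+\infty$ as $|\zeta|\to\infty$. Splitting into the two ranges of $p$: if $p\ge 2$, then $|\zeta|^{p-1}\ge|\zeta|$ for $|\zeta|\ge 1$, so the denominator is controlled by $2|\zeta|^{p-1}$ and the quotient dominates $|\zeta|/2$; if $1<p<2$, then $|\zeta|\ge|\zeta|^{p-1}$ for $|\zeta|\ge 1$, so the denominator is controlled by $2|\zeta|$ and the quotient dominates $|\zeta|^{p-1}/2$. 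In either case the ratio blows up, giving \eqref{Bo4}. I do not expect any real obstacle here: the main structural assumption is Proposition \ref{H monot}, which carries all the analytic content, while the other two conditions reduce to elementary homogeneity estimates.
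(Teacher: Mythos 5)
Your proposal is correct and follows essentially the same route as the paper: \eqref{Bo2} via the homogeneity bound $H^{p-1}(\zeta)\le C_2^{p-1}|\zeta|^{p-1}$ together with Lemma \ref{H-prop}(ii), \eqref{Bo3} as a direct citation of Proposition \ref{H monot}, and \eqref{Bo4} from Euler's identity $a(\zeta)\cdot\zeta=H^p(\zeta)\ge C_1^p|\zeta|^p$. Your case split on $p\ge2$ versus $1<p<2$ is just an expanded form of the paper's bound $C_1^p\min\{|\zeta|^{p-1},|\zeta|\}$, so there is no substantive difference.
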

\begin{proof}
From Lemma \ref{H-prop}(ii), \eqref{norm-eqv} and \eqref{def-op-anisot}, there exists $\tilde{C} >0$ such that %By item (ii) of the Lemma \ref{H-prop} and (\ref{norm-eqv}) we have 
$$
\begin{array}{lcl}
	|a(\xi)|=|H^{p-1}(\xi)\nabla H(\xi)|=H^{p-1}(\xi)|\nabla H(\xi)| \le CH^{p-1}(\xi) \le \tilde{C}|\xi|^{p-1}
\end{array}
$$
for all $,\xi \in \R^N$, so $a$ satisfies $(\ref{Bo2})$. Condition $(\ref{Bo3} )$ follows from Proposition \ref{H monot}. Now since
$$	\frac{a(\zeta)\zeta}{|\zeta|+|\zeta|^{p-1}}=\frac{H^{p-1}(\zeta)\nabla H(\zeta)\zeta}{|\zeta|+|\zeta|^{p-1}}\quad\forall\zeta,\eta \in \R^N,$$ 
from Lemma \ref{H-prop}(iii) and (\ref{norm-eqv}), we have
$$	\frac{a(\zeta)\zeta}{|\zeta|+|\zeta|^{p-1}}=\frac{H^{p}(\zeta)}{|\zeta|+|\zeta|^{p-1}}\ge \frac{C_1^p|\zeta|^p}{|\zeta|+|\zeta|^{p-1}}\ge C_1^p\min\{|\zeta|^{p-1},|\zeta|\}.$$%\to +\infty$$
Thus $a(\zeta)\zeta\big/(|\zeta|+|\zeta|^{p-1})\rightarrow+\infty,$ as $|\zeta| \to +\infty,$ proving (\ref{Bo4}).
\end{proof}

\begin{proposition}\label{Msec2}
	Let $(u_n)\subset W^{1,p}(\Omega)$ be a sequence such that
$$-\mbox{\textnormal{div\,}}(H^{p-1}(\nabla u_n)\nabla H(\nabla u_n)) +|u_n|^{p-2}u_n=\lambda|u_n|^{q-2}u_n+ |u_n|^{p^*-2}u_n\,\,\,\, \mbox{in}\,\, \Omega,$$
and $u_n \rightharpoonup u$ weakly in $W^{1,p}(\Omega)$, $u_n \rightarrow u$ strongly in $L^{p}_{loc}(\Omega)$ and $u_n \rightarrow u$ a.e. in $\Omega.$
%$$u_n \rightharpoonup u\,\,\mbox{weakly in}\,\,W^{1,p}(\Omega),\,\,\mbox{strongly in}\,\,L^{p}_{loc}(\Omega)\,\,\mbox{and}\,\,a.e.\,\,\mbox{in}\,\,\Omega.$$
Then $\nabla u_n \rightarrow \nabla u\,\,\mbox{strongly in}\,\,(L^q(\Omega))^{N}\,\,\mbox{for any}\,\,q<p.$ By passing to a subsequence, if necessary, one has $\nabla u_n \rightarrow \nabla u$ a.e. in $\Omega.$
\end{proposition}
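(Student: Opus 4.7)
The plan is to check the hypotheses of the Boccardo--Murat lemma (Lemma \ref{Bo1}) and invoke it directly. The structural conditions \eqref{Bo2}--\eqref{Bo4} for the operator $a(\xi)=H^{p-1}(\xi)\nabla H(\xi)$, viewed as a Carathéodory map independent of $(x,s)$, have already been verified in Lemma \ref{Bo9}; so my only task is to rewrite
$$-\mbox{div\,}(a(\nabla u_n)) = -|u_n|^{p-2}u_n + \lambda|u_n|^{q-2}u_n + |u_n|^{p^*-2}u_n$$
as $f_n+g_n$, where $f_n\to f$ strongly in $W^{-1,p'}(\Omega)$ and $g_n$ satisfies the $L^\infty$-duality bound \eqref{Bo7} on compacts. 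The natural split places the subcritical nonlinearities in $f_n$ and the critical one in $g_n$:
$$f_n := -|u_n|^{p-2}u_n + \lambda|u_n|^{q-2}u_n, \qquad g_n := |u_n|^{p^*-2}u_n.$$

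For the strong convergence of $f_n$, I will combine Rellich--Kondrachov with continuity of Nemytskii operators. Since $(u_n)$ is bounded in $W^{1,p}(\Omega)$, the compact embeddings yield $u_n\to u$ strongly in $L^s(\Omega)$ for every $s<p^*$. Choosing $s=(p-1)(p^*)'$ and $s=(q-1)(p^*)'$ -- both admissible precisely because $p,q<p^*$ -- the Nemytskii maps $u\mapsto |u|^{p-2}u$ and $u\mapsto |u|^{q-2}u$ are continuous from these spaces into $L^{(p^*)'}(\Omega)$, giving strong convergence of each summand in $L^{(p^*)'}(\Omega)$. The dual Sobolev embedding $L^{(p^*)'}(\Omega)\hookrightarrow W^{-1,p'}(\Omega)$ then upgrades this to strong convergence of $f_n$ in $W^{-1,p'}(\Omega)$.

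For $g_n$, only a crude $L^1$-bound is needed. Given a compact $K\subset\Omega$ and $\varphi\in\mathscr{D}(\Omega)$ with $\mbox{supp}\,\varphi\subset K$, Hölder's inequality together with the uniform $W^{1,p}$-boundedness and the Sobolev embedding $W^{1,p}(\Omega)\hookrightarrow L^{p^*}(\Omega)$ yields
$$|\langle g_n,\varphi\rangle|\le \|\varphi\|_{L^\infty(\Omega)}\int_K|u_n|^{p^*-1}\,dx \le \|\varphi\|_{L^\infty(\Omega)}\,\|u_n\|_{L^{p^*}(\Omega)}^{p^*-1}|K|^{1/p^*} \le C_K\|\varphi\|_{L^\infty(\Omega)},$$
with $C_K$ independent of $n$, which is precisely \eqref{Bo7}. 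Lemma \ref{Bo1} then delivers at once $\nabla u_n\to\nabla u$ in $(L^q(\Omega))^N$ for every $q<p$; extracting a further subsequence furnishes the a.e.\ convergence.

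The one delicate point -- and the reason one must route through Boccardo--Murat rather than testing monotonicity against $u_n-u$ directly -- is that the critical term $|u_n|^{p^*-2}u_n$ cannot be absorbed into $f_n$: the loss of compactness of the Sobolev embedding at $p^*$ obstructs strong $W^{-1,p'}$-convergence. The two-piece structure of Lemma \ref{Bo1} is designed for exactly this situation, and once the split above is identified the remaining verifications are routine applications of Sobolev embedding and Nemytskii continuity.
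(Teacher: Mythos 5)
Your proposal is correct and follows essentially the same route as the paper: the identical decomposition $f_n=\lambda|u_n|^{q-2}u_n-|u_n|^{p-2}u_n$, $g_n=|u_n|^{p^*-2}u_n$, strong $W^{-1,p'}$-convergence of $f_n$ via compact embeddings and Nemytskii continuity, the uniform $L^1$-bound on compacts for $g_n$, and then Lemma \ref{Bo1}. Your verification of the $f_n$-convergence (through $L^{(p^*)'}$ and the dual Sobolev embedding) is in fact written slightly more cleanly than the paper's.
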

\begin{proof}
The claims will follow by applying Lemma \ref{Bo1}. Let $f_n=\lambda |u_n|^{q-2}u_n-|u_n|^{p-2}u_n$ and $g_n=|u_n|^{p^*-2}u_n$. Given $v \in W^{1,p}(\Omega),$ by setting $f=\lambda |u|^{q-2}u-|u|^{p-2}u$ we have
$$
\begin{aligned}
\langle f_n-f, v\rangle_{W^{-1,p'}}&=\lambda\int_{\Omega}\left(|u_n|^{q-2}u_n- |u|^{q-2}u\right)vdx-\int_{\Omega}\left(|u_n|^{p-2}u_n-|u|^{p-2}u\right)vdx\\
&\leq C\bigg[\||u_n|^{q-2}u_n\!- |u|^{q-2}u\|_{L^{\frac{q}{q-1}}(\Omega)}\!\!\!-\||u_n|^{p-2}u_n\!- |u|^{p-2}u\|_{L^{\frac{p}{p-1}}(\Omega)}\bigg]\|v\|_{W^{1,p}(\Omega)}.
\end{aligned}
$$
%	We want to ensure that we are able to apply the Lemma \ref{Bo1}. In this intent, we consider $f_n=\lambda |u_n|^{q-2}u_n-|u_n|^{p-2}u_n$ and $g_n=|u_n|^{p^*-2}u_n$. Given $v \in W^{1,p}(\Omega)$ and considering $f=\lambda |u|^{q-2}u-|u|^{p-2}u$
%%%%	$$\begin{array}{ccc}
%%%%	\langle f_n-f, v\rangle_{W^{-1,p'}}&=&\lambda\int_{\Omega}\left(|u_n|^{q-2}u_n- |u|^{q-2}u\right)vdx-\int_{\Omega}\left(|u_n|^{p-2}u_n-|u|^{p-2}u\right)vdx\\
%%%%	&\le& C\left\{\left[\int_{\Omega}\left(|u_n|^{q-2}u_n- |u|^{q-2}u\right)^{\frac{q}{q-1}}dx\right]^{\frac{q-1}{q}}-\left[\int_{\Omega}\left(|u_n|^{p-2}u_n-|u|^{p-2}u\right)^{\frac{p}{p-1}}dx\right]^{\frac{p-1}{p}}\right\}\|v\|\\
%%%%\end{array}	$$
Thus since $u_n \rightharpoonup u$ in $W^{1,p}(\Omega)$ and $p \le q \le p^*,$ we have
	\begin{equation}\label{boh1}
	\|f_n-f\|_{W^{-1,p'}}\leq C\bigg[\||u_n|^{q-2}u_n\!- |u|^{q-2}u\|_{L^{\frac{q}{q-1}}(\Omega)}\!\!\!-\||u_n|^{p-2}u_n\!- |u|^{p-2}u\|_{L^{\frac{p}{p-1}}(\Omega)}\bigg] \to 0.
\end{equation}
Now let $\varphi \in \mathscr{D}(\Omega)$ with $\mbox{supp\,}(\varphi)\subset K$. Then it holds
$$|\langle g_n, \varphi\rangle|\le \int_{K}|u_n|^{p^*-1}|\varphi|dx\le \|\varphi\|_{\infty}\int_{K}|u_n|^{p^*-1}dx.$$
Using that $u_n \rightharpoonup u$ in $W^{1,p}(\Omega)$, passing to a subsequence if necessary,  we have $u_n \rightarrow u$ in $L^{p^*-1}(\Omega)$, so there exists a constant $C>0$ (uniform with respect to $K$)  such that  %$C=C_{\Omega}>0$ such that 
\begin{equation}\label{boh2}
|\langle g_n, \varphi\rangle|\le C \|\varphi\|_{\infty}.
	%%%%|\langle g_n, \varphi\rangle|\le \|\varphi\|_{\infty}\int_{K}|u_n|^{p^*-1}dx\le C_K \|\varphi\|_{\infty}.
\end{equation}Therefore from (\ref{boh1}) and (\ref{boh2}) the desired conclusions will follow from Lemma \ref{Bo1}. %The proof is complete.
%Therefore from (\ref{boh1}) and (\ref{boh2}) we can apply Lemma \ref{Bo1}, and the proof is complete. %%%%and we have
%%%%%$$\nabla u_n \rightarrow \nabla u\,\,\mbox{strongly in}\,\,(L^q(\Omega))^{N}\,\,\mbox{for any}\,\,q<p.$$
\end{proof}

\subsection{Boundedness of solution of critical anisotropic $p$-Laplace Neumann problems}
In this section we are interested in general Neumann problems of the form
\begin{equation}\label{prob-Neumann-geral}
\left\{\begin{array}{lcl}
	-\Delta ^H_pu=f(x,u)\quad \mbox{in\,\,}\Omega,\\
	a(\nabla u)\cdot \nu =0\quad \mbox{on\,\,} \partial \Omega,
\end{array}\right.
\end{equation}involving the anisotropic $p$-Laplace operator $-\Delta ^H_p$ given by \eqref{def-op-anisot}, and sources $f$ with critical growth. More precisely, we prove (see Theorem \ref{thm-L-infinito} below) that weak solutions of \eqref{prob-Neumann-geral} are bounded if the growth of $f$ is as in \eqref{growth-critico}. Boundedness of solution for $p$-Laplace type equations with Dirichlet boundary condition and sources with critical growth has been proved, for instance, in Guedda and Veron \cite{Gued-Vero},  and Peral \cite{Pe}, see also the references therein. More general operators have been treated in Serrin \cite{Serr}, and Ladyzhenskaya and  Ural'tseva \cite{Lady-Ural}. For anisotropic $p$-Laplace equations in unbounded cones supplied with Neumann boundary condition, boundedness of solution has been obtained in  Ciraolo, Figalli and Roncoroni \cite{Figalli 1}. 

\smallskip

We first prove a technical lemma which will be useful in several calculations.

\begin{lemma}\label{lema1-limitacao}
Let $g:\Omega\rightarrow\mathbb{R}$ be a measurable function and $u \in W^{1,p}(\Omega)$ be such that $H(\nabla u)^{p-1}\langle \nabla H(\nabla u),\nabla u\rangle g$ is integrable. Then
$$\int_{\Omega}H(\nabla u)^{p-1}\langle \nabla H(\nabla u), \nabla u\rangle g(x)\,dx= \int_{\Omega}H(\nabla u)^{p}g(x)\,dx.$$
\end{lemma}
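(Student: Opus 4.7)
The proof plan is to reduce the identity to a pointwise statement using the Euler identity for positively homogeneous functions, which here is already recorded as Lemma \ref{H-prop}(iii). Recall that $(H_1)$ makes $H$ positively homogeneous of degree $1$, so $\langle \xi, \nabla H(\xi)\rangle = H(\xi)$ for all $\xi \neq 0$. Multiplying by $H^{p-1}(\xi)$ yields
\[
H^{p-1}(\xi)\,\langle \nabla H(\xi),\xi\rangle = H^{p}(\xi) \qquad \text{for every } \xi\in\R^N\setminus\{0\}.
\]
So the desired identity holds pointwise a.e.\ on the set $\Omega_{1}:=\{x\in\Omega : \nabla u(x)\neq 0\}$ after taking $\xi=\nabla u(x)$.

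It remains to handle the set $\Omega_{0}:=\{x\in\Omega : \nabla u(x)=0\}$, on which $\nabla H(\nabla u)$ is a priori undefined. On $\Omega_{0}$ the right-hand side integrand is $H^{p}(0)g(x)=0$ by $(H_1)$. For the left-hand side, note that by Lemma \ref{H-prop}(ii) one has $|\nabla H(\xi)|\le C$ for $\xi\neq 0$ and by \eqref{norm-eqv} one has $H^{p-1}(\xi)\le C_{2}^{p-1}|\xi|^{p-1}\to 0$ as $\xi\to 0$ (recall $p>1$). Hence the map $a(\xi)=H^{p-1}(\xi)\nabla H(\xi)$ extends continuously by $a(0)=0$, so $H^{p-1}(\nabla u)\langle \nabla H(\nabla u),\nabla u\rangle$ can be taken to vanish a.e.\ on $\Omega_{0}$ (any choice of extension of $\nabla H$ at $0$ gives the same value because it is multiplied by $0$). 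Consequently, both integrands coincide a.e.\ on all of $\Omega$.

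Finally, since $H^{p-1}(\nabla u)\langle \nabla H(\nabla u),\nabla u\rangle\,g$ is assumed integrable, and equals $H^{p}(\nabla u)\,g$ almost everywhere on $\Omega$, the latter is also integrable and the two integrals agree. This completes the argument; there is no real obstacle beyond the measure-theoretic care needed on $\{\nabla u=0\}$, which is why the lemma is stated as a preliminary technical fact to be used throughout the $L^\infty$-regularity section.
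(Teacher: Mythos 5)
Your proof is correct and follows essentially the same route as the paper's: apply the Euler identity $\langle\xi,\nabla H(\xi)\rangle=H(\xi)$ (Lemma \ref{H-prop}(iii)) pointwise on $\{\nabla u\neq0\}$, and observe that both integrands vanish on $\{\nabla u=0\}$ because $H^{p-1}(\xi)\langle\nabla H(\xi),\xi\rangle\to0$ as $\xi\to0$. Your write-up is in fact slightly more careful than the paper's about why the left-hand integrand is well defined and zero on the degenerate set.
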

\begin{proof}
Since from \eqref{norm-eqv} and Lemma \ref{H-prop}(iii) one has $\langle\nabla H(\xi),\xi\rangle\rightarrow0$, as $\xi\rightarrow0,$ using again Lemma \ref{H-prop}(iii) it follows that

\medskip

\noindent\qquad$\int_{\Omega}H(\nabla u)^{p-1}\langle \nabla H(\nabla u), \nabla u\rangle g(x)\,dx= \int_{\{|\nabla u|>0\}}\!\!H(\nabla u)^{p}g(x)\,dx= \int_{\Omega}H(\nabla u)^{p}g(x)\,dx.$
\end{proof}

\medskip

Next result establishes integrability of all possible solution of \eqref{prob-Neumann-geral}. By adapting the proof of a theorem of Brezis and Kato \cite{BK}, see also Struwe \cite{Stru}, we prove the following
%Next result establishes integrability of solution for a class of Neumann problems involving the anisotropic $p$-Laplace operator $-\Delta ^H_p$. By adapting the proof of a theorem of Brezis and Kato \cite{BK}, see also Struwe \cite{Stru}, we prove the following
\begin{lemma}\label{lemma-integrabilidade-finita}
Let $\Omega\subset\mathbb{R}^N,$ $N\geqslant3,$ be a bounded smooth domain, and $f:\Omega\times\mathbb{R}\rightarrow\mathbb{R}$ be a Carath\'eodory function satisfying
$$|f(x,u)|\leqslant a(x)(1+|u|^{p-1}),\quad\mbox{a.e. }x\in\Omega,\:\forall u\in\mathbb{R},$$where $1<p<N$, and $a\in L^{\frac{N}{p}}(\Omega).$ Then each weak solution $u \in W^{1,p}(\Omega)$ of \eqref{prob-Neumann-geral} belongs to $L^q(\Omega)$ for all $q\in[1,\infty).$
\end{lemma}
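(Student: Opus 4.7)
The plan is to adapt the Brezis--Kato bootstrap via a Moser-type iteration, starting from the Sobolev embedding $u\in W^{1,p}(\Omega)\hookrightarrow L^{p^*}(\Omega)$ (valid since $H$ is equivalent to the Euclidean norm by \eqref{norm-eqv}).

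For each $\beta\ge 1$ and $L>0$, put $v_L:=\min(|u|,L)\in W^{1,p}(\Omega)\cap L^\infty(\Omega)$, take $\phi:=u\,v_L^{p(\beta-1)}\in W^{1,p}(\Omega)$ as test function in the weak formulation of \eqref{prob-Neumann-geral} (an admissible Neumann test), and introduce the auxiliary $W:=|u|\,v_L^{\beta-1}$. A chain-rule computation gives
$$\nabla\phi=\bigl(1+p(\beta-1)\chi_{\{|u|\le L\}}\bigr)\,v_L^{p(\beta-1)}\,\nabla u,$$
so Lemma~\ref{lema1-limitacao} together with Lemma~\ref{H-prop}(iii) turns the left-hand side of the test identity into $\int_\Omega (1+p(\beta-1)\chi_{\{|u|\le L\}})\,v_L^{p(\beta-1)}H^p(\nabla u)\,dx$. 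An analogous computation for $W$, combined with the $1$-homogeneity $(H_1)$, yields the pointwise identity
$$H^p(\nabla W)=\bigl(1+(\beta-1)\chi_{\{|u|\le L\}}\bigr)^p\,v_L^{p(\beta-1)}\,H^p(\nabla u),$$
from which the left-hand side of the test identity is seen to be at least $\beta^{-p}\int_\Omega H^p(\nabla W)\,dx$.

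On the right-hand side, the growth hypothesis together with $|u|\le 1+|u|^p$ yields
$$\int_\Omega f(x,u)\phi\,dx\le \int_\Omega a(x)\,v_L^{p(\beta-1)}\,dx + 2\int_\Omega a(x)\,W^p\,dx,$$
where $W^p\le |u|^{p\beta}$ everywhere. H\"older with exponent $N/p$ controls $\int a\,W^p\le \|a\|_{L^{N/p}}\|W\|_{L^{p^*}}^p$ and $\int a\,v_L^{p(\beta-1)}\le \|a\|_{L^{N/p}}\|u\|_{L^{(\beta-1)p^*}}^{p(\beta-1)}$; the Sobolev embedding $\|W\|_{L^{p^*}}^p\le C(\|H(\nabla W)\|_{L^p}^p+\|W\|_{L^p}^p)$ (again using \eqref{norm-eqv}) combined with the Brezis--Kato splitting $a=a_1+a_2$ with $a_1\in L^\infty(\Omega)$ and $\|a_2\|_{L^{N/p}}$ arbitrarily small then absorbs the critical term $\int a\,W^p$ into $\beta^{-p}\int H^p(\nabla W)$. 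Letting $L\to\infty$ by Fatou/monotone convergence gives the recursion
$$\|\,|u|^\beta\,\|_{L^{p^*}(\Omega)}^p \le C(\beta)\bigl(1+\|u\|_{L^{p\beta}(\Omega)}^{p\beta}+\|u\|_{L^{(\beta-1)p^*}(\Omega)}^{p(\beta-1)}\bigr),$$
valid whenever the right-hand side is finite, and hence $u\in L^{\beta p^*}(\Omega)$ under that condition.

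Starting from $s_0=p^*$, at step $k$ we take $\beta_k$ maximal subject to $p\beta_k\le s_k$ and $(\beta_k-1)p^*\le s_k$, and set $s_{k+1}=\beta_k p^*$. A short check shows $s_{k+1}=(p^*/p)\,s_k$ while $s_k\le N$, and $s_{k+1}=s_k+p^*$ once $s_k\ge N$; in either regime $s_k\to\infty$, which gives $u\in L^q(\Omega)$ for every $q\in[1,\infty)$. The main obstacle is the anisotropic bookkeeping: without an isotropic $|\nabla u|^p$ structure, one must invoke Lemma~\ref{H-prop}(iii) and Lemma~\ref{lema1-limitacao} to extract $H^p(\nabla u)$ cleanly from $H^{p-1}(\nabla u)\langle\nabla H(\nabla u),\nabla u\rangle$, and use $(H_1)$ to compute $H^p(\nabla W)$ pointwise. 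A secondary concern is the uniform-in-$L$ absorption and the passage $L\to\infty$, which one must carry out without losing the estimates.
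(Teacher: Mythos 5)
Your argument is correct and is essentially the paper's own proof: the same Brezis--Kato truncation test function $u\,(|u|\wedge L)^{p(\beta-1)}$, the same use of Lemma~\ref{lema1-limitacao} and \eqref{norm-eqv} to extract and lower-bound the gradient term, the same smallness splitting of $a$ in $L^{N/p}$ to absorb the critical term, and the same passage $L\to\infty$ followed by iteration of the exponent. The only cosmetic difference is bookkeeping: the paper folds the term $\int a\,v_L^{p(\beta-1)}$ into the main term via $(|u|\wedge L)^{ps}\le 1+|u|^p(|u|\wedge L)^{ps}$, so its recursion tracks a single constraint $u\in L^{p(s+1)}$, whereas you track two constraints; both iterations diverge.
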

\begin{proof}
The core in the proof is to establish the following
%We start by proving the following
\begin{claim}\label{claim-integrab}
\textnormal{If $u\in L^{p(s+1)}(\Omega)$ for some $s\geqslant0$ then $|u|^{s+1}\in W^{1,p}(\Omega).$}
\end{claim}

\noindent Indeed, let us choose $\phi=u(|u|\wedge M)^{ps}$ as test function in \eqref{prob-Neumann-geral}, where $M>0$ and $|u|\wedge M=\min\{|u|,M\}.$ Thus we have
$$
\begin{aligned}
	\int_{\Omega}H(\nabla u)^{p-1}\langle \nabla H(\nabla u), \nabla \phi\rangle\,dx  &\leqslant \int_{\Omega}|a(x)|(1+|u|^{p-1})\phi\,dx\\
		&\leqslant\int_{\Omega}|a(x)|(1+2|u|^{p})(|u|\wedge M)^{ps}\,dx\\
	&\leqslant 3\int_{\Omega}|a(x)||u|^{p}(|u|\wedge M)^{ps}\,dx +\int_{\Omega}|a(x)|\,dx\\
		&\leqslant C+3\int_{\{a(x)\geqslant\eta\}}|a(x)|(|u|(|u|\wedge M)^{s})^{p}\,dx
\end{aligned}
$$where $\eta>0$ is arbitrary and $C=C\big(\|a\|_{L^1(\Omega)},\|u\|_{L^{p(s+1)}(\Omega)}\big)$ is a positive constant independent on $M$. From H\"older and Sobolev's inequalities, we obtain
$$
\begin{aligned}
	\int_{\Omega}H(\nabla u)^{p-1}\langle \nabla H(\nabla u), \nabla \phi\rangle\,dx &\leqslant C+3\bigg(\int_{\{a(x)\geqslant\eta\}}\!|a(x)|^{\frac{N}{p}}\,dx\bigg)^{\frac{p}{N}}\bigg(\int_{\Omega}(|u|(|u|\wedge M)^{s})^{p^*}\,dx\bigg)^{\frac{p}{p^*}}\\
	&\leqslant C+\widetilde{C}\bigg(\int_{\{a(x)\geqslant\eta\}}\!|a(x)|^{\frac{N}{p}}\,dx\bigg)^{\frac{p}{N}}\|\nabla(|u|(|u|\wedge M)^{s})\|_{L^{p}(\Omega)}^{p}
\end{aligned}$$where $\widetilde{C}>0$ is a constant independent on $M$. Therefore, on the one hand, the following estimate holds
\begin{equation}\label{est1-integrability}
\int_{\Omega}H(\nabla u)^{p-1}\langle \nabla H(\nabla u), \nabla \phi\rangle\,dx \leqslant C+\varepsilon(\eta)\|\nabla(|u|(|u|\wedge M)^{s})\|_{L^{p}(\Omega)}^{p},
\end{equation}where
$$\varepsilon(\eta):=\widetilde{C}\bigg(\int_{\{a(x)\geqslant\eta\}}\!|a(x)|^{\frac{N}{p}}\,dx\bigg)^{\frac{p}{N}}\longrightarrow0,\quad\mbox{as }\eta\rightarrow\infty.$$On the other hand, denoting $\mathcal{M}=\{x\in\Omega:|u(x)|\leqslant M\}$, we use Lemma \ref{lema1-limitacao} to calculate
%%%$$
%%%%\begin{aligned}
%%%	\int_{\Omega}H(\nabla u)^{p-1}\langle \nabla H(\nabla u), \nabla \phi\rangle\,dx  &= 
%%%		\int_{\Omega}H(\nabla u)^{p}(|u|\wedge M)^{ps}\,dx\\&+ps\int_{\{|u|\leqslant M\}}H(\nabla u)^{p-1}u|u|^{ps-1}\langle \nabla H(\nabla u), \nabla|u|\rangle\,dx
	%	&= \int_{\Omega}H(\nabla u)^{p}(|u|\wedge M)^{ps}\,dx+ps\!\int_{\{|u|\leqslant M\}}\!\!H(\nabla u^+)^{p}u^+|u|^{ps-1}\,dx\\&+ps\int_{\{|u|\leqslant M\}}H(-\nabla u^-)^{p}u^-|u|^{ps-1}\,dx
%%%		\\	&= \int_{\Omega}H(\nabla u)^{p}(|u|\wedge M)^{ps}\,dx+ps\int_{\{|u|\leqslant M\}}H(\nabla u)^{p}|u|^{ps}\,dx\\
%%%		&=\int_{\Omega}H(\nabla u)^{p}(|u|\wedge M)^{ps}\,dx+ps\int_{\Omega}H(\nabla (|u|\wedge M))^{p}|u|^p(|u|\wedge M)^{p(s-1)}\,dx.
%%%\end{aligned}$$
$$
\begin{aligned}
	\int_{\Omega}\!\!H(\nabla u)^{p-1}\langle \nabla H(\nabla u), \nabla \phi\rangle&= 
	\!\int_{\Omega}\!\!H(\nabla u)^{p}(|u|\wedge M)^{ps}+ps\!\!\int_{\mathcal{M}}\!\!\!H(\nabla u)^{p-1}u|u|^{ps-1}\langle \nabla H(\nabla u), \nabla|u|\rangle
		\\	&= \int_{\Omega}H(\nabla u)^{p}(|u|\wedge M)^{ps}+ps\int_{\mathcal{M}}\!H(\nabla u)^{p}|u|^{ps}\\
		&=\!\int_{\Omega}\!\!H(\nabla u)^{p}(|u|\wedge M)^{ps}+ps\!\!\int_{\Omega}\!\!H(\nabla (|u|\wedge M))^{p}|u|^p(|u|\wedge M)^{p(s-1)}.
\end{aligned}
$$Thus from \eqref{norm-eqv} we can estimate
$$
\begin{aligned}
	\int_{\Omega}H(\nabla u)^{p-1}\langle \nabla H(\nabla u), \nabla \phi\rangle
		&\geqslant C\bigg[\int_{\Omega}(|u|\wedge M)^{ps}|\nabla u|^{p}+\int_{\Omega}|u|^p(|u|\wedge M)^{p(s-1)}|\nabla (|u|\wedge M)|^{p}\bigg]\\
		&\geqslant C\int_{\Omega}\big|(|u|\wedge M)^{s}\nabla|u|+s|u|(|u|\wedge M)^{(s-1)}\nabla (|u|\wedge M)\big|^{p}\,dx,
\end{aligned}$$where $C>0$ is a constant independent on $M$ and $\eta$, which leads to
\begin{equation}\label{est2-integrability}
\int_{\Omega}H(\nabla u)^{p-1}\langle \nabla H(\nabla u), \nabla \phi\rangle\,dx\geqslant C\|\nabla (|u|(|u|\wedge M)^s)\|^p_{L^p(\Omega)}.
\end{equation}Combining \eqref{est1-integrability} and \eqref{est2-integrability} it is possible to choose $\varepsilon(\eta)$ sufficiently small (by choosing $\eta$ large enough) to ensure $|u|(|u|\wedge M)^s$ is bounded in $W^{1,p}(\Omega),$ as $M\rightarrow\infty.$ This proves the claim.

Finally, since Claim \ref{claim-integrab} holds with $s=0$ and $1<p<N$, iterating Claim \ref{claim-integrab} we obtain $u\in L^{q_k}(\Omega),$ where $q_k=(N/(N-p))^k$ for all $k\geqslant1$. Since $q_k\rightarrow\infty,$ the lemma is proved. %and Claim \ref{claim-integrab} holds with $s=0,$ we obtain  %setting $s_0=0$ and $$ Claim \ref{claim-integrab} ensures 
%$$
%\begin{aligned}
%	\int_{\Omega}H(\nabla u)^{p-1}\langle \nabla H(\nabla u), \nabla \phi\rangle\,dx  &= 
%		\int_{\Omega}H(\nabla u)^{p}(|u|\wedge M)^{ps}\,dx\\&+ps\int_{\{|u|\leqslant M\}}H(\nabla u)^{p-1}u(|u|\wedge M)^{ps-1}\langle \nabla H(\nabla u), \nabla|u|\rangle\,dx\\
%\end{aligned}
%$$
\end{proof}

We are ready to prove the main result in this section, which is the following %The main result in this section is the following

\begin{theorem}\label{thm-L-infinito}
Let $\Omega\subset\mathbb{R}^N,$ $N\geqslant3,$ be a bounded smooth domain, and $f:\Omega\times\mathbb{R}\rightarrow\mathbb{R}$ be a Carath\'eodory function satisfying
\begin{equation}\label{growth-critico}
|f(x,u)|\leqslant C(1+|u|^{r-1}),\quad\mbox{a.e. }x\in\Omega,\:\forall u\in\mathbb{R},
\end{equation}where $1<p<N$, and $r\in(1,p^*].$ If $u \in W^{1,p}(\Omega)$ is a weak solution of \eqref{prob-Neumann-geral} then $u\in L^{\infty}(\Omega)$. %Then each weak solution $u \in W^{1,p}(\Omega)$ of \eqref{prob-Neumann-geral} belongs to $L^{\infty}(\Omega)$.
\end{theorem}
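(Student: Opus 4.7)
The argument will proceed in two stages: a Brezis--Kato-type bootstrap that upgrades $u$ to $L^q(\Omega)$ for every finite $q$, followed by a Moser iteration that upgrades to $L^\infty(\Omega)$. The key observation that drives the first stage is that the critical exponent $r=p^*$ is precisely the threshold at which the coefficient absorbing $|u|^{r-p}$ still lies in the Marcinkiewicz class $L^{N/p}(\Omega)$.

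\textbf{Stage 1.} Rewrite \eqref{growth-critico} as
$$|f(x,u)|\leq a(x)\bigl(1+|u|^{p-1}\bigr), \qquad a(x):=C\bigl(1+|u(x)|^{(r-p)_+}\bigr).$$
Since $u\in W^{1,p}(\Omega)\hookrightarrow L^{p^*}(\Omega)$ and $(r-p)_+\cdot N/p\leq(p^*-p)N/p=p^*$ (using $r\leq p^*$), we have $a\in L^{N/p}(\Omega)$. Lemma~\ref{lemma-integrabilidade-finita} then delivers $u\in L^q(\Omega)$ for every $q\in[1,\infty)$, and in turn $f(\cdot,u)\in L^q(\Omega)$ for every finite~$q$.

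\textbf{Stage 2.} Fix any $q>N/p$, so that the H\"older conjugate satisfies $q'<p^*/p$. For $\beta\geq 1$ and $M>0$, test the weak formulation of \eqref{prob-Neumann-geral} against $\phi:=u(|u|\wedge M)^{p(\beta-1)}\in W^{1,p}(\Omega)$. The very computation carried out in the proof of the Claim in Lemma~\ref{lemma-integrabilidade-finita}, combined with Lemma~\ref{lema1-limitacao} and \eqref{norm-eqv}, produces a lower bound
$$\int_\Omega H(\nabla u)^{p-1}\langle\nabla H(\nabla u),\nabla\phi\rangle\,dx\geq \frac{C_0}{\beta^{p-1}}\bigl\|\nabla\bigl(|u|(|u|\wedge M)^{\beta-1}\bigr)\bigr\|_{L^p(\Omega)}^{p}.$$
On the right-hand side, the elementary bound $|\phi|\leq|u|^{p\beta-p+1}$ and H\"older's inequality give
$$\int_\Omega f(x,u)\,\phi\,dx\leq \|f(\cdot,u)\|_{L^{q}(\Omega)}\|u\|_{L^{(p\beta-p+1)q'}(\Omega)}^{p\beta-p+1}.$$
Letting $M\to\infty$ and applying the Sobolev inequality $\|v\|_{L^{p^*}}^{p}\leq C_S(\|\nabla v\|_{L^p}^p+\|v\|_{L^p}^p)$ to $v=|u|^{\beta}$ yields a recursion of the form
$$\|u\|_{L^{p^*\beta}(\Omega)}^{p\beta}\leq K\beta^{p-1}\Bigl(\|u\|_{L^{(p\beta-p+1)q'}(\Omega)}^{p\beta-p+1}+\|u\|_{L^{p\beta}(\Omega)}^{p\beta}\Bigr).$$

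\textbf{Stage 3: Iteration.} Since $q'<p^*/p$, both subcritical exponents $(p\beta-p+1)q'$ and $p\beta$ are strictly less than $p^*\beta$, so the recurrence \emph{strictly} gains integrability. Define $\beta_0:=1$ and $\beta_{k+1}$ by $p^*\beta_k=\max\bigl\{(p\beta_{k+1}-p+1)q',\,p\beta_{k+1}\bigr\}$; the $\beta_k$ grow geometrically. Setting $A_k:=\|u\|_{L^{p^*\beta_k}(\Omega)}$, the recurrence takes the form $A_{k+1}\leq(K\beta_{k+1})^{1/(p\beta_{k+1})}A_k^{\gamma_k}$ with the $\gamma_k$ uniformly bounded. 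As $\sum_{k}\log(K\beta_k)/\beta_k$ converges, $(A_k)$ remains bounded, and passing to the limit $k\to\infty$ yields $\|u\|_{L^\infty(\Omega)}<\infty$.

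\textbf{Main obstacle.} The technical core of the argument is Stage 2: the careful tracking of the $\beta$-dependence of the coercivity constant arising from the anisotropic operator, and the verification that the recurrence strictly gains integrability at each step. The abundance of integrability delivered by Stage 1 is what enables the free choice of $q>N/p$, hence $q'<p^*/p$, which is the quantitative mechanism behind the closure of the Moser iteration.
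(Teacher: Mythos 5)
Your proposal is correct, and its first stage coincides with the paper's: both rewrite the critical growth as $|f(x,u)|\leq a(x)(1+|u|^{p-1})$ with $a=C(1+|u|^{(r-p)_+})\in L^{N/p}(\Omega)$ (using $(p^*-p)N/p=p^*$ and $u\in L^{p^*}$) and invoke the Brezis--Kato Lemma \ref{lemma-integrabilidade-finita} to get $u\in L^q(\Omega)$ for all finite $q$. Where you diverge is in passing from this to $L^\infty$: the paper does \emph{not} run a Moser iteration. Instead it tests once with the truncation $\phi=(u-k)^+$, uses the abundant integrability to bound $\|\phi\|_{W^{1,p}}^p\leq C\|\phi\|_{L^\sigma}$ for a suitable $\sigma<p$, and derives the level-set decay $\int_\Omega(u-k)^+\,dx\leq C|A_k|^{1+\delta}$ with $\delta>0$, concluding by the classical Ladyzhenskaya--Ural'tseva lemma (and symmetrically for $u^-$); to make the left-hand side coercive in the full $W^{1,p}$-norm it first absorbs a zero-order term by rewriting the equation as \eqref{prob-aux-bdd}. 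Your Moser iteration is a legitimate and more self-contained alternative (no appeal to the L--U lemma), at the price of tracking the $\beta$-dependence of the coercivity constant and normalizing the two inhomogeneous terms in your recursion (e.g.\ replacing $A_k$ by $\max\{A_k,1\}$) so that the infinite product closes; the paper's route is shorter precisely because Stage 1 already supplies enough integrability that a single truncation test suffices. Neither route has a gap; yours simply trades a citation for an iteration.
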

\begin{proof}
It suffices to prove that each weak solution $u \in W^{1,p}(\Omega)$ of the problem
\begin{equation}\label{prob-aux-bdd}
\left\{\begin{array}{lcl}
	-\Delta ^H_pu+|u|^{p-2}u=g(x,u)\quad \mbox{in\,\,}\Omega,\\
	a(\nabla u)\cdot \nu =0\quad \mbox{on\,\,}\partial \Omega,
\end{array}\right.
\end{equation}belongs to $L^{\infty}(\Omega)$, where $g$ is a Carath\'eodory function satisfying \eqref{growth-critico}. Indeed, from \eqref{prob-aux-bdd} one recovers \eqref{prob-Neumann-geral} by setting $g(\cdot,u)=f(\cdot,u)+|u|^{p-2}u.$ Note that since $$|g(x,u)|\leqslant a(x)(1+|u|^{p-1}),\quad\mbox{a.e. }x\in\Omega,$$where $a(\cdot)=C\max\{1,|u(\cdot)|^{p^*-p}\}$, which in turn obeys $a\in L^{\frac{N}{p}}(\Omega),$ from Lemma \ref{lemma-integrabilidade-finita} we have $u\in L^{q}(\Omega)$ for all $q\in[1,\infty).$ In what follows, we shall prove that $u^+\in L^{\infty}(\Omega).$

Let $A_k\doteq\{x\in\Omega\,:\,u(x)\geqslant k\}$ for all $k>0.$ Choose $\sigma_1>1$ sufficiently large in such a way that
  $1/(\sigma_1(p-1))<1/p-1/p^*$, and let $\sigma=\sigma_1/(\sigma_1-1).$ Setting $\phi=(u-k)^+,$ note that
      $$
\begin{aligned}      
      \|\phi\|^p_{W^{1,p}(\Omega)}\leqslant C\int_{\Omega}\big[H(\nabla\phi)^p+|\phi|^p\big]\,dx&= C\int_{\Omega}\Big[H(\nabla u)^{p-1}\langle \nabla H(\nabla u), \nabla \phi\rangle+|\phi|^{p}\Big]\,dx\\ 
      &\leqslant C\int_{\Omega}\Big[H(\nabla u)^{p-1}\langle \nabla H(\nabla u), \nabla \phi\rangle+|u|^{p-2}u\phi\Big]\,dx.
\end{aligned}$$
%$$\|\phi\|^p_{W^{1,p}(\Omega)}\leqslant C\int_{\Omega}H(\nabla\phi)^p\,dx+\int_{\Omega}|\phi|^p\,dx \leqslant\int_{\Omega}H(\nabla u)^{p-1}\langle \nabla H(\nabla u), \nabla \phi\rangle\,dx+\int_{\Omega}|u|^{p-2}u\phi\,dx.$$
 So using $\phi$ as test function in \eqref{prob-aux-bdd}, from H\"older's inequality and \eqref{norm-eqv} we have
  $$\|\phi\|^p_{W^{1,p}(\Omega)}\leqslant C \int_{\Omega}(1+|u|^{r-1})|\phi|\,dx\leqslant C\|\phi\|_{L^{\sigma}(\Omega)},$$
   %Using $\phi=(u-k)^+$ as test function in \eqref{prob-aux-bdd}, from H\"older's inequality and \eqref{norm-eqv}  we have
 %   $$\begin{aligned}    
 %   \|\phi\|^p_{W^{1,p}(\Omega)}\leqslant C\int_{\Omega}H(\nabla\phi)^p\,dx+\int_{\Omega}|\phi|^p\,dx &\leqslant\int_{\Omega}H(\nabla u)^{p-1}\langle \nabla H(\nabla u), \nabla \phi\rangle\,dx+\int_{\Omega}|u|^{p-2}u\phi\,dx\\
%    &\leqslant \int_{\Omega}C(1+|u|^{r-1})|\phi|\,dx\leqslant C\|\phi\|_{L^{\sigma}(\Omega)},
%\end{aligned}$$
 where $C=C(\||u|^{r-1}\|_{L^{\sigma_1}(\Omega)})>0$ is a constant independent on $k$. Since $s<p,$ again by H\"older's inequality we have for all $k>0$
  $$\|\phi\|_{L^{\sigma}(\Omega)}\leqslant\|\phi\|_{L^{p}(\Omega)}|A_k|^{\frac{1}{s}-\frac{1}{p}}.$$Thus we can infer that
\begin{equation}\label{eq1-limitacao}
  \|\phi\|_{W^{1,p}(\Omega)}\leqslant C|A_k|^{\frac{1}{p-1}\big(\frac{1}{s}-\frac{1}{p}\big)},\quad\forall k>0.
\end{equation}Now by Sobolev's and H\"older's inequalities, we have 
\begin{equation}\label{eq2-limitacao}
\int_{\Omega}(u-k)^+\,dx\leqslant C  \|\phi\|_{W^{1,p}(\Omega)}|A_k|^{1-\frac{1}{p^*}},\quad\forall k>0.
\end{equation}From \eqref{eq1-limitacao} and \eqref{eq2-limitacao}, we obtain
\begin{equation}\label{eq3-limitacao}
\int_{\Omega}(u-k)^+\,dx\leqslant C |A_k|^{1+\delta},\quad\forall k>0,
\end{equation}where $\delta=1/p-1/p^*-1/(\sigma_1(p-1)),$ which is a positive constant from the choice of $\sigma_1.$ Such an estimate on level sets of the form \eqref{eq3-limitacao} ensures $u^+\in L^{\infty}(\Omega),$ see \cite{Lady-Ural} (Lemma 2.1, Chapter 5). The same argument also applies to show that $u^-\in L^{\infty}(\Omega).$ Hence $u\in L^{\infty}(\Omega)$.
 %Hence $u\in L^{\infty}(\Omega),$ and the proof is complete. %which is a positive constant by the very definition of $\sigma_1.$ Such an estimate on level sets of the form \eqref{eq3-limitacao} ensures $u^+\in L^{\infty}(\Omega),$ see \cite{Lady-Ural} (Lemma 2.1, Chapter 5). The same argument also shows that $u^-\in L^{\infty}(\Omega).$ Hence $u\in L^{\infty}(\Omega),$ and the proof is complete.
 \end{proof}
\begin{remark}
\textnormal{Theorem \ref{thm-L-infinito} applies for general Neumann problems involving anisotropic $p$-Laplace equations with sources of critical growth on bounded Lipschitz domains. Indeed, the regularity necessary on $\Omega$ in Theorem \ref{thm-L-infinito} concerns the continuous embedding $W^{1,p}(\Omega)\hookrightarrow L^{p^*}(\Omega)
$, which holds for bounded Lipschitz domains, see \cite{Gilb-Trud}.}
%\textnormal{Boundedness of solution for $p$-Laplace type equations with Dirichlet B.C. and sources with critical growth has been proved, for instance, in Guedda and Veron \cite{Gued-Vero},  and Peral \cite{Pe}, see also the references therein. More general operators have been treated in Serrin \cite{Serr}, and Ladyzhenskaya and  Ural'tseva \cite{Lady-Ural}. For anisotropic $p$-Laplace equations in unbounded cones supplied with Neumann B.C., boundedness of solution has been obtained in  Ciraolo, Figalli and Roncoroni \cite{Figalli 1}. Theorem \ref{thm-L-infinito} applies for general Neumann problems involving anisotropic $p$-Laplace equations with sources of critical growth on bounded domains.}
\end{remark}
\section{Critical Level Estimates }

\begin{lemma}\label{L21}
	Let $\tilde{B}=B_1 \cap \{x_n > h(x')\},$ where $B_1=B(0,1)$ is the unit ball in $\R^N$. Let $h(x')$ be a $C^1$ function defined on $\{ x' \in \R^{N-1}, |x'|<1\}$ such that $h$ and $Dh$ vanish at $0$. Then for any $u \in W^{1,p}(B_1)$ with $\mbox{\textnormal{supp\,}} u \subset B_1$, the following holds.
	\begin{itemize}
		\item[{\bf (i)}] If $h \equiv 0$, then
		\begin{equation}\label{2}
			\int_{\tilde{B}}|H(\nabla u)|^pdx \ge 2^{\frac{-p}{n}}S_{\Sigma, H}\left[\int_{\tilde{B}}|u|^{p^*}dx\right]^{p/p^*}.
		\end{equation}
		\item[{\bf (ii)}] For all $\varepsilon>0$ there exists $\delta>0$ such that, if $|Dh|\le \delta,$ then
		\begin{equation}\label{3}
			\int_{\tilde{B}}|H(\nabla u)|^pdx \ge \left(2^{\frac{-p}{n}}S_{\Sigma, H}-\varepsilon\right)\left[\int_{\tilde{B}}|u|^{p^*}dx\right]^{p/p^*}.
		\end{equation} 
	\end{itemize} 
\end{lemma}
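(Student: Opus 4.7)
The plan is to reduce (i) to the sharp anisotropic Sobolev inequality on the half-space viewed as a convex cone in its own right, and then to handle (ii) by a $C^1$ boundary-straightening change of variables that perturbs the setting of (i).

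\emph{Part (i).} With $h\equiv 0$, $\tilde B$ is the upper half of the unit ball. Since $\mbox{supp\,}u \subset B_1$, extending $u|_{\tilde B}$ by zero to $\R^N_+:=\{x_n>0\}$ produces a function $\tilde u \in \mathcal{D}^{1,p}(\R^N_+)$ with compact support. The open half-space $\R^N_+$ is itself a convex open cone, so the Ciraolo-Figalli-Roncoroni Sobolev inequality \eqref{1} applied to $\R^N_+$ in place of $\Sigma$ yields
$$\int_{\R^N_+} H(\nabla \tilde u)^p\,dx \geq S_{\R^N_+,H}\left(\int_{\R^N_+}|\tilde u|^{p^*}\,dx\right)^{p/p^*},$$
which gives the inequality of (i) once one identifies $S_{\R^N_+,H}=2^{-p/N}S_{\Sigma,H}$. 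I would obtain this identification by evaluating the Rayleigh quotient at the explicit extremal $U^H_{\lambda,x_0}$ from \eqref{extrem-funct} on each of the two cones: by the homogeneity of $\hat H_0$, the ratio of the two Sobolev constants reduces to the ratio of solid-angle densities $\bigl(|\R^N_+\cap B_1|/|\Sigma\cap B_1|\bigr)^{p/N}=2^{-p/N}$, namely the cone-density rule of \cite{Figalli 1}.

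\emph{Part (ii).} For $|Dh|\leq\delta$ small, I would flatten the graph of $h$ via the $C^1$-diffeomorphism $\Phi(y',y_n):=(y',y_n+h(y'))$, which maps $\{y_n>0\}$ bijectively onto $\{x_n>h(x')\}$ with $\det D\Phi\equiv 1$. Setting $v:=u\circ\Phi$, one has $v\in W^{1,p}$ compactly supported in a set slightly larger than $B_1$, and the identity $\nabla u\circ\Phi = (D\Phi)^{-T}\nabla v$ together with the estimate $\|(D\Phi)^{-T}-I\|\leq C\delta$ imply, by continuity of $H$,
$$\bigl|H(\nabla u\circ\Phi)^p - H(\nabla v)^p\bigr|\leq \omega(\delta)\,H(\nabla v)^p,$$
where $\omega(\delta)\to 0$ as $\delta\to 0$. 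Integrating against $\det D\Phi=1$, applying (i) to $v$ on the flat half-ball, and pulling back via $\Phi^{-1}$ yields the inequality of (ii) with an $\varepsilon$-loss governed by $\omega(\delta)$, which can be made arbitrarily small by choosing $\delta$ small.

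\emph{Main obstacle.} The main subtlety lies in fixing the precise constant $2^{-p/N}S_{\Sigma,H}$ in part (i). A direct reflection $(x',x_n)\mapsto(x',-x_n)$ -- the workhorse of the isotropic proof in \cite{Wang} -- is incompatible with $H$ in general, because $(H_1)$ only yields invariance under the antipodal map $\xi\mapsto-\xi$, not under negation of a single coordinate. One must therefore bypass reflection entirely and invoke the sharp Sobolev inequality on the convex cone $\R^N_+$ together with the explicit extremals from \cite{Figalli 1}; the factor $2^{-p/N}$ is then recovered as the geometric ratio of the solid-angle densities of $\R^N_+$ and $\Sigma$ in $B_1$.
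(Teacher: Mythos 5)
Your proposal is correct in outline, and for part (i) it takes a genuinely different route from the paper. The paper proves (i) by the classical reflection trick: it assumes $u$ even in $x_n$, writes $\int_{\tilde B}H(\nabla u)^p\,dx=\tfrac12\int_{B_1}H(\nabla u)^p\,dx$, applies the Sobolev inequality on the full ball, and extracts the factor $2^{p/p^*-1}=2^{-p/N}$. The obstacle you flag is real and in fact applies to the paper's own argument: $(H_1)$ gives $H(-\xi)=H(\xi)$ but not $H(\xi',-\xi_n)=H(\xi',\xi_n)$, and the even reflection of $u$ flips only the $n$-th component of the gradient, so the halving identity for $\int H(\nabla u)^p$ is not justified for a general anisotropic $H$. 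Your alternative — extend by zero to $\R^N_+$, viewed as a convex cone, and invoke the sharp cone inequality of \cite{Figalli 1} — avoids this entirely and is the more robust argument. One refinement: the constant $S_{\R^N_+,H}$ is governed by the fraction of the \emph{Wulff ball} $\{\hat H_0<1\}$ (not the Euclidean ball) contained in the cone, since the extremals $U^H_{\lambda,x_0}$ and $H(\nabla U^H_{\lambda,x_0})$ are level functions of $\hat H_0(\cdot-x_0)$; the factor is nonetheless exactly $2^{-p/N}$ for the half-space because the Wulff ball is centrally symmetric ($H$ even implies $H_0$ even), so any hyperplane through the origin bisects its volume. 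This yields the inequality with $2^{-p/N}S_{\R^N,H}\geq 2^{-p/N}S_{\Sigma,H}$, which is what the lemma asserts. For part (ii) your boundary-straightening diffeomorphism $\Phi(y',y_n)=(y',y_n+h(y'))$ with unit Jacobian and the perturbation bound $\|(D\Phi)^{-T}-I\|\le C\delta$ is the same idea as the paper's change of variables, written more carefully (the paper's version carries spurious Jacobian factors $|1-g'|$); just make sure the inequality is run in the direction $\int_{\tilde B}H(\nabla u)^p\,dx\ge(1-\omega(\delta))\int_{\{y_n>0\}}H(\nabla v)^p\,dy$ before applying (i) to $v$.
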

\begin{proof} {\bf (i)} Since the values of $u(x)$ are irrelevant for $x_n<0$, we may assume $u$ is even with respect to $x_n. $ So we have
$$
\begin{aligned}
\int_{\tilde{B}}|H(\nabla u)|^pdx=\frac{1}{2}\int_{B_1}|H(\nabla u)|^pdx\ge \frac{1}{2}S_{\Sigma, H}\bigg[\int_{B_1}|u|^{p^*}dx\bigg]^{p/p^*}\!\!&\geq \frac{1}{2}S_{\Sigma, H}\bigg[2\int_{\tilde{B}}|u|^{p^*}dx\bigg]^{p/p^*}\\
&\geq 2^{p/p^*-1}S_{\Sigma, H}\bigg[\int_{\tilde{B}}|u|^{p^*}dx\bigg]^{p/p^*}\\
		&= 2^{-p/n}S_{\Sigma, H}\bigg[\int_{\tilde{B}}|u|^{p^*}dx\bigg]^{p/p^*}.
\end{aligned}
$$
%%%%%%%%		\begin{eqnarray*}
%%%%%%%%		\int_{\tilde{B}}|H(\nabla u)|^pdx&=&\frac{1}{2}\int_{B_1}|H(\nabla u)|^pdx\ge \frac{1}{2}S_{\Sigma, H}\left[\int_{B_1}|u|^{p^*}dx\right]^{p/p^*}\\
%%%%%%%%		&\ge& \frac{1}{2}S_{\Sigma, H}\left[2\int_{\tilde{B}}|u|^{p^*}dx\right]^{p/p^*}\\
%%%%%%%%		&\ge& 2^{p/p^*-1}S_{\Sigma, H}\left[\int_{\tilde{B}}|u|^{p^*}dx\right]^{p/p^*}\\
%%%%%%%%		&=& 2^{-p/n}S_{\Sigma, H}\left[\int_{\tilde{B}}|u|^{p^*}dx\right]^{p/p^*}\\
%%%%%%%%	\end{eqnarray*}	
\noindent{\bf (ii)} Making the change of variable $y'=x'$ and $y_n=x_n-g(x')$, if $|1-g'(y')|\le \delta$ it follows that
$$
\begin{aligned}
	\int_{\tilde{B}}|H(\nabla u)|^pdx=\int_{\R^{N-1}}dx'\int_{-g(y')}^{0}|H(\nabla u)|^pdx_n &= \int_{\R^{N-1}}dy'\int_{0}^{g(x')}|H(\nabla u)|^p|1-g'(y')|dy_n\\
		&\ge \int_{\R^{N-1}}\!\!\!dy'\int_{0}^{g(y')}\!\!\!(1-|g'(y')|)|H(\nabla u)|^pdy_n\\
	&\ge \int_{\R^{N-1}}\!\!\!dy'\int_{0}^{g(y')}\!\!\!(1-\delta)|H(\nabla u)|^pdy_n\\
		&=(1-\delta)\int_{\tilde{B}}|H(\nabla u)|^pdx\\
&\ge (1-\delta)2^{-p/n}S_{\Sigma, H}\left[\int_{\tilde{B}}|u|^{p^*}dx\right]^{p/p^*}\\
&\ge (2^{-p/n}S_{\Sigma, H}-\varepsilon)\left[\int_{\tilde{B}}|u|^{p^*}dx\right]^{p/p^*}.
\end{aligned}
$$%The lemma is proved.	
%%%%%%%$$
%%%%%%%\begin{aligned}
%%%%%%%	\int_{\tilde{B}}|H(\nabla u)|^pdx&=\int_{\R^{N-1}}dx'\int_{-g(y')}^{0}|H(\nabla u)|^pdx_n= \int_{\R^{N-1}}dy'\int_{0}^{g(x')}|H(\nabla u)|^p|1-g'(y')|dy_n\\
%%%%%%%		&\ge \int_{\R^{N-1}}\!\!\!dy'\int_{0}^{g(y')}\!\!\!(1-|g'(y')|)|H(\nabla u)|^pdy_n
%%%%%%%	\ge \int_{\R^{N-1}}\!\!\!dy'\int_{0}^{g(y')}\!\!\!(1-\delta)|H(\nabla u)|^pdy_n\\
%%%%%%%		&=(1-\delta)\int_{\tilde{B}}|H(\nabla u)|^pdx
%%%%%%%\ge (1-\delta)2^{-p/n}S_{\Sigma, H}\left[\int_{\tilde{B}}|u|^{p^*}dx\right]^{p/p^*}\\
%%%%%%%&\ge (2^{-p/n}S_{\Sigma, H}-\varepsilon)\left[\int_{\tilde{B}}|u|^{p^*}dx\right]^{p/p^*}
%%%%%%%\end{aligned}$$	
%%%%%%%\begin{eqnarray*}
%%%%%%%	\int_{\tilde{B}}|H(\nabla u)|^pdx&=&\int_{\R^{N-1}}dx'\int_{-g(y')}^{0}|H(\nabla u)|^pdx_n= \int_{\R^{N-1}}dy'\int_{0}^{g(x')}|H(\nabla u)|^p|1-g'(y')|dy_n\\
%%%%%%%	&\ge& \int_{\R^{N-1}}dy'\int_{0}^{g(y')}\left(1-|g'(y')|\right)|H(\nabla u)|^pdy_n\\
%%%%%%%	&\ge& \int_{\R^{N-1}}dy'\int_{0}^{g(y')}\left(1-\delta\right)|H(\nabla u)|^pdy_n\\
%%%%%%%	&\ge&\left(1-\delta\right)\int_{\R^{N-1}}dy'\int_{0}^{g(y')}|H(\nabla u)|^pdy_n \\
%%%%%%%	&=&\left(1-\delta\right)	\int_{\tilde{B}}|H(\nabla u)|^pdx\\
%%%%%%%	&\ge&\ \left(1-\delta\right)2^{-p/n}S_{\Sigma, H}\left[\int_{\tilde{B}}|u|^{p^*}dx\right]^{p/p^*}\\
%%%%%%%	&\ge& \left(2^{-p/n}S_{\Sigma, H}-\varepsilon\right)\left[\int_{\tilde{B}}|u|^{p^*}dx\right]^{p/p^*} \\
%%%%%%%\end{eqnarray*}
\end{proof}From now on, let us denote by $\|\cdot\|$ the anisotropic norm on $W^{1,p}(\Omega)$ induced by the right-hand side of $(P),$ i.e., %$-\Delta ^H_p$, i.e., 
$$\|u\|=\bigg(\int_{\Omega}[H^p(\nabla u)+|u|^p]dx\bigg)^{1/p},\quad \forall u\in W^{1,p}(\Omega).$$
\begin{lemma}\label{MP Geometry}
The functional $J$ in \eqref{func-J} enjoys the  mountain pass geometry, i.e., it holds: %Precisely, it holds. %the  satisfies the following conditions.
%%%%%%%	The functional $J$ in \eqref{func-J} satisfies the mountain pass geometry. More precisely, the following conditions hold. %, i.e., the following hold.
\begin{itemize}
\item[{\bf (i)}] There exist $\theta, \beta>0$ such that $J(u)\geq \beta$ if one has $\|u\|= \theta.$
\item[{\bf (ii)}]There exists $e\in D^{1,p}(\R^N)$ with $\|e\|> \theta$ such that $J(e)<0.$
\end{itemize}
%%%%%%	\noindent $i)\mbox{There exist}\ \theta, \beta>0\ \mbox{such that}\ J(u)\geq \beta,\ \mbox{for}\ \|u\|= \theta.$\\
%%%%%%	\noindent $ii) \mbox{There exists}\ e\in D^{1,p}(\R^N),\ \mbox{with}\ \|e\|> \theta,\ \mbox{such that}\ J(e)<0.$	
\end{lemma}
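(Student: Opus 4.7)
The plan is to verify both conditions directly from the functional
\[
J(u)=\frac{1}{p}\|u\|^{p}-\frac{\lambda}{q}\int_{\Omega}(u^{+})^{q}\,dx-\frac{1}{p^{*}}\int_{\Omega}(u^{+})^{p^{*}}\,dx,
\]
exploiting the fact that $\|\cdot\|$ is equivalent to the standard $W^{1,p}(\Omega)$ norm and that $\Omega$ is a bounded Lipschitz domain (so Sobolev embeddings are available).

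For part \textbf{(i)}, I would first use the two-sided bound \eqref{norm-eqv} to conclude that the anisotropic norm $\|\cdot\|$ is equivalent to the standard $W^{1,p}(\Omega)$ norm; therefore the continuous embeddings $W^{1,p}(\Omega)\hookrightarrow L^{q}(\Omega)$ and $W^{1,p}(\Omega)\hookrightarrow L^{p^{*}}(\Omega)$ give constants $C_{q},C_{p^{*}}>0$ with
\[
\int_{\Omega}(u^{+})^{q}\,dx\le C_{q}\|u\|^{q},\qquad \int_{\Omega}(u^{+})^{p^{*}}\,dx\le C_{p^{*}}\|u\|^{p^{*}}.
\]
Plugging these into $J$ yields
\[
J(u)\ge \frac{1}{p}\|u\|^{p}-\frac{\lambda C_{q}}{q}\|u\|^{q}-\frac{C_{p^{*}}}{p^{*}}\|u\|^{p^{*}}=\|u\|^{p}\!\left(\frac{1}{p}-\frac{\lambda C_{q}}{q}\|u\|^{q-p}-\frac{C_{p^{*}}}{p^{*}}\|u\|^{p^{*}-p}\right).
\]
Since $p<q<p^{*}$, the function in parentheses tends to $1/p$ as $\|u\|\to 0^{+}$; hence one may choose $\theta>0$ small enough so that this parenthesis is bounded below by $1/(2p)$, and then $\beta:=\theta^{p}/(2p)>0$ does the job on the sphere $\|u\|=\theta$.

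For part \textbf{(ii)}, I would fix any nontrivial nonnegative function $u_{0}\in W^{1,p}(\Omega)$ (for instance $u_{0}\equiv 1$, which lies in $W^{1,p}(\Omega)$ because $\Omega$ is bounded and, extended trivially, also defines an element in $D^{1,p}(\mathbb{R}^{N})$ after multiplication by a cutoff; or a fixed nonnegative test function) and look at the scalar function
\[
\varphi(t):=J(tu_{0})=\frac{t^{p}}{p}\|u_{0}\|^{p}-\frac{\lambda t^{q}}{q}\int_{\Omega}u_{0}^{q}\,dx-\frac{t^{p^{*}}}{p^{*}}\int_{\Omega}u_{0}^{p^{*}}\,dx,\qquad t\ge 0.
\]
Since $\int_{\Omega}u_{0}^{p^{*}}\,dx>0$ and $p^{*}>q>p$, the leading term as $t\to\infty$ is $-t^{p^{*}}\int_{\Omega}u_{0}^{p^{*}}\,dx/p^{*}$, so $\varphi(t)\to -\infty$. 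Consequently, choosing $t$ sufficiently large one has both $\|tu_{0}\|>\theta$ and $J(tu_{0})<0$; setting $e:=tu_{0}$ completes the proof.

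This argument is essentially routine and I do not foresee a substantive obstacle; the only point to be careful about is the ambient space in (ii): the statement mentions $D^{1,p}(\mathbb{R}^{N})$ but the relevant object is the restriction of $e$ to $\Omega$, which lives in $W^{1,p}(\Omega)$ where $J$ is defined. Taking $u_{0}$ as the restriction to $\Omega$ of a smooth compactly supported nonnegative function on $\mathbb{R}^{N}$ that is not identically zero on $\Omega$ produces an $e=tu_{0}\in D^{1,p}(\mathbb{R}^{N})$ whose restriction satisfies the desired inequalities, reconciling the two spaces.
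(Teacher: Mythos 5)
Your proposal is correct and follows essentially the same route as the paper: Sobolev embedding plus the $p<q<p^{*}$ exponent ordering to get the lower bound near the sphere $\|u\|=\theta$, and the scaling $t\mapsto J(tu_{0})\to-\infty$ along a fixed nonnegative direction for the second condition. Your added care in choosing $u_{0}\ge 0$ (so that $(tu_{0})^{+}=tu_{0}$ and the $p^{*}$-term genuinely dominates) and in reconciling the spaces $D^{1,p}(\mathbb{R}^{N})$ and $W^{1,p}(\Omega)$ tidies up two small imprecisions present in the paper's own argument.
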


\begin{proof} {\bf (i)} From \eqref{func-J} and the continuous embedding $W^{1,p}(\Omega) \hookrightarrow L^{p*}(\Omega),$ we have
$$J(u)
	\ge\tilde{a}\frac{1}{p}\|u\|^p-\frac{\lambda}{q}\int_{\Omega}|u|^q-\frac{1}{p^*}\int_{\Omega}|u|^{p^*}dx
	\ge\tilde{a}\frac{1}{p}\|u\|^p-C_1\frac{\lambda}{q}\|u\|^q-\frac{C_2}{p^*}\|u\|^{p^*}.$$
%%%%%	$$\begin{array}{lcl}
%%%%%	J(u)&=&\frac{1}{p}\int_{\Omega} [H(\nabla u)^p+|u|^p]dx-\frac{\lambda}{q}\int_{\Omega}|u|^q-\frac{1}{p^*}\int_{\Omega}|u|^{p^*}dx\\
%%%%%	&\ge&\tilde{a}\frac{1}{p}\|u\|^p-\frac{\lambda}{q}\int_{\Omega}|u|^q-\frac{1}{p^*}\int_{\Omega}|u|^{p^*}dx\\
%%%%%	&\ge&\tilde{a}\frac{1}{p}\|u\|^p-C_1\frac{\lambda}{q}\|u\|^q-\frac{C_2}{p^*}\|u\|^{p^*}\\
%%%%%	\end{array}	$$
Thus setting $\|u\|=\theta$ with $\theta>0$ sufficiently small, since $p<q<p^*-1$ it is possible to find $\beta>0$ such that $J(u)=\beta>0.$  {\bf (ii)} Let $t>0$ and $w \in C^{\infty}_{0}(\R^N)$, $w\not\equiv0,$ be given. From \eqref{func-J} we have
$$J(tw)\geq\tilde{b}\frac{1}{p}\|tw\|^p-\frac{\lambda}{q}\int_{\Omega}|tw|^q-\frac{1}{p^*}\int_{\Omega}|tw|^{p^*}dx\geq\tilde{b}t^p\frac{1}{p}\|w\|^p-\frac{t^q\lambda}{q}\int_{\Omega}|w|^q-\frac{t^{p^*}}{p^*}\int_{\Omega}|w|^{p^*}dx.$$Therefore $J(tw)\to -\infty,$ as $t \to +\infty$,  since $p<q<p^*-1$. Then it suffices to choose $e=t_0w$ with $t_0>$ sufficiently large to conclude the proof.
%%%%%%%%%%%\noindent \textbf{Proof of ii):} Given %%%%%%%%%%%$w \in C^{\infty}_{0}(\R^N)$, then
%%%%%%%%%%%$$\begin{array}{lcl}
%%%%%%%%%%%	J(tw)&=&\frac{1}{p}\int_{\Omega} [H(\nabla tw)^p+|tw|^p]dx-\frac{\lambda}{q}\int_{\Omega}|tw|^q-\frac{1}{p^*}\int_{\Omega}|tw|^{p^*}dx\\
%%%%%%%%%%%	&\ge&\tilde{b}\frac{1}{p}\|tw\|^p-\frac{\lambda}{q}\int_{\Omega}|tw|^q-\frac{1}{p^*}\int_{\Omega}|tw|^{p^*}dx\\
%%%%%%%%%%%	&\ge&\tilde{b}t^p\frac{1}{p}\|w\|^p-\frac{t^q\lambda}{q}\int_{\Omega}|w|^q-\frac{t^{p^*}}{p^*}\int_{\Omega}|w|^{p^*}dx,
%%%%%%%%%%%\end{array}$$then, $J(tw)\to -\infty$ with $t \to +\infty$. Hence, for $t_0$ large enough and $e=t_0w$ we get $J(e)<0.$
\end{proof}

\begin{lemma}\label{bound}
	If $(u_n)$ is a $(PS)_c$ sequence of $J$, then $(u_n)$ is bounded in $W^{1,p}(\Omega)$.
\end{lemma}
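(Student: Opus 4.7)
The plan is to use the classical Ambrosetti--Rabinowitz device: form a linear combination of $J(u_n)$ and $\langle J'(u_n), u_n\rangle$ whose coefficient pattern isolates $\|u_n\|^p$ with a strictly positive factor, exploiting the fact that the exponent $q$ satisfies $p<q<p^*$ so that $1/q$ lies strictly between $1/p$ and $1/p^*$.

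First I would evaluate $\langle J'(u_n), u_n\rangle$. Using Lemma \ref{lema1-limitacao} (with $g\equiv 1$) one has
$$\int_\Omega H(\nabla u_n)^{p-1}\langle \nabla H(\nabla u_n),\nabla u_n\rangle\,dx=\int_\Omega H(\nabla u_n)^p\,dx,$$
and since $u_n^+u_n^-=0$ one has $|u_n|^{p-2}u_n\cdot u_n=|u_n|^p$ together with $(u_n^+)^{\alpha-1}u_n=(u_n^+)^\alpha$ for $\alpha\in\{q,p^*\}$. Hence
$$\langle J'(u_n),u_n\rangle=\|u_n\|^p-\lambda\int_\Omega (u_n^+)^q\,dx-\int_\Omega (u_n^+)^{p^*}\,dx.$$

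Next I would compute
$$J(u_n)-\frac{1}{q}\langle J'(u_n),u_n\rangle=\left(\frac{1}{p}-\frac{1}{q}\right)\|u_n\|^p+\left(\frac{1}{q}-\frac{1}{p^*}\right)\int_\Omega(u_n^+)^{p^*}\,dx.$$
Both coefficients are strictly positive because $p<q<p^*$, so in particular
$$\left(\frac{1}{p}-\frac{1}{q}\right)\|u_n\|^p\leqslant J(u_n)-\frac{1}{q}\langle J'(u_n),u_n\rangle.$$

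Finally, from $J(u_n)\to c$ one has $|J(u_n)|\leqslant C$ for all $n$, while $J'(u_n)\to 0$ in $(W^{1,p}(\Omega))^*$ gives $|\langle J'(u_n),u_n\rangle|\leqslant\varepsilon_n\|u_n\|$ with $\varepsilon_n\to 0$. Combining these yields
$$\left(\frac{1}{p}-\frac{1}{q}\right)\|u_n\|^p\leqslant C+\frac{\varepsilon_n}{q}\|u_n\|,$$
and since $p>1$, a standard Young-type inequality argument forces $(\|u_n\|)$ to be bounded. There is no real obstacle here beyond the bookkeeping of the various exponents; the only point that requires the preparatory material of Section 2 is the identification $\int H(\nabla u_n)^{p-1}\langle \nabla H(\nabla u_n),\nabla u_n\rangle\,dx=\int H(\nabla u_n)^p\,dx$ which recovers the norm-cube term cleanly despite $\nabla H$ being singular at the origin.
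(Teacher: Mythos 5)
Your proof is correct, and it follows the same Ambrosetti--Rabinowitz scheme as the paper: estimate $J(u_n)-\theta\,\langle J'(u_n),u_n\rangle$ from below by a multiple of $\|u_n\|^p$ and from above by $C+o_n(1)\|u_n\|$. The only difference is the choice of $\theta$: the paper takes $\theta=1/p^*$, which leaves the residual term $-\lambda\left(\frac{1}{q}-\frac{1}{p^*}\right)\int_\Omega (u_n^+)^q\,dx$ on the favourable side of the inequality; since this term is negative and not a priori bounded, the paper has to absorb it (this is what the remark $F(t)/(f(t)t)\to 1/p^*$ and the constant $k$ in \eqref{bound1} are meant to handle, and the justification there is rather terse). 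Your choice $\theta=1/q$ makes the $q$-term cancel identically and leaves $\left(\frac{1}{q}-\frac{1}{p^*}\right)\int_\Omega(u_n^+)^{p^*}\,dx\geqslant 0$ as the only residue, so no absorption is needed and the argument closes immediately; this is the cleaner of the two variants whenever $p<q<p^*$. Your use of Lemma \ref{lema1-limitacao} to identify $\int_\Omega H(\nabla u_n)^{p-1}\langle\nabla H(\nabla u_n),\nabla u_n\rangle\,dx$ with $\int_\Omega H(\nabla u_n)^p\,dx$ is exactly the right way to recover the anisotropic norm despite the singularity of $\nabla H$ at the origin, and the final step $\left(\frac{1}{p}-\frac{1}{q}\right)\|u_n\|^p\leqslant C+\frac{\varepsilon_n}{q}\|u_n\|$ with $p>1$ indeed forces boundedness.
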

\begin{proof}

Let $(u_n)$ be a $(PS)_c$ sequence of $J$, so that $J(u_n) \to c$ and $J'(u_n) \to 0$. First, by setting $f(t)=\lambda|t|^{q-2}t+ |t|^{p^*-2}t$, we have $F(t)\big/f(t)t\rightarrow 1/p^*,$ as $t \to +\infty.$ Thus from \eqref{func-J} we obtain
%%%%$$\frac{F(t)}{f(t)t}\to\frac{1}{p^*},\,\,\mbox{as}\,\, t \to +\infty,$$
\begin{equation}\label{bound1}
J(u_n)-\frac{1}{p^*}J'(u_n)u_n\ge \tilde{a}\left[\frac{1}{p}-\frac{1}{p^*}\right]\|u_n\|^p-k,
\end{equation}
where $k>0$ is a constant independent on $n$. Now a direct calculation gives %For another hand,
\begin{equation}\label{bound2}
	J(u_n)-\frac{1}{p^*}J'(u_n)u_n \le c+o_n(1)\|u_n\|,
\end{equation} where $o_n(1)\rightarrow0,$ as $n\rightarrow+\infty$. 
Combining (\ref{bound1}) and (\ref{bound2}),  it follows that $(u_n)$ is bounded in $W^{1,p}(\Omega).$
\end{proof} 

%\begin{lemma}\label{PS-cond}
%	The functional $J$ verifies the Palais-Smale condition for any level $c$ such that,
%	$$c<\frac{1}{2N}S_H^{N/p}.$$
%\end{lemma}
%\begin{proof}
%	Let $(u_n)$ in $W^{1,p}(\Omega)$ be a $(PS)_c$ sequence, with $c<\frac{1}{2N}S_H^{N/p}.$ Then,
%\begin{equation}\label{PS1}
%J(u_n)=\frac{1}{p}\int_{\Omega} \left[H(\nabla u_n)^{p}+|u_n|^{p} \right]dx - \frac{\lambda}{q}\int_{\Omega} |u_n|^{q} dx-\frac{1}{p^*}\int_{\Omega}|u_n|^{p^*}dx= c+o(1).
%\end{equation}
%and
% \begin{eqnarray}\label{PS2}
% 	J'(u_n)\varphi=\int_{\Omega} \left[H(\nabla u_n)^{p-2}\langle \nabla_{\xi} H(\nabla u_n), \nabla \varphi\rangle +|u_n|^{p-2}u_n \varphi \right]dx -\\ 
% 	- \lambda\int_{\Omega} |u_n|^{q-2}u_n\varphi dx-\int_{\Omega}|u_n|^{p^*-2}u_n \varphi dx =o(\|\varphi\|).\nonumber
% \end{eqnarray}
%
%	 Thanks to Lemma \ref{bound}, the sequence $(u_n)$ is bounded and, as a consequence, up to a subsequence if necessary,
%	\begin{eqnarray}
%u_n \rightharpoonup u,\,\,W^{1,p}(\Omega)\\
%u_n \rightarrow u,\,\,L^{q}(\Omega),\,1\le q<p^*\\
%u_n \rightarrow u,\,\,a.e.\,\,\mbox{in}\,\,\Omega.
%	\end{eqnarray}
%
%Passing to a subsequence in (\ref{PS2}) we see that $u$ is a critical point of $J$. We need to verify that $u\not\equiv 0.$ Indeed, suppose that $u \equiv 0$, then
%$$$$  
%\end{proof}
\begin{lemma}\label{RM1}
	Suppose $c<\frac{1}{2N\tilde{C}}S_{\Sigma,H}^{N/p}.$  Then there is a solution $u$ of $(P)$ satisfying $J(u)\leq c$.
\end{lemma}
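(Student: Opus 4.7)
The plan is to apply the mountain pass theorem via Lemma \ref{MP Geometry} to produce a $(PS)_c$-sequence $(u_n)\subset W^{1,p}(\Omega)$ at the level $c$, and then to identify its weak limit as the required non-trivial critical point using the sub-threshold hypothesis on $c$. By Lemma \ref{bound}, the sequence is bounded in $W^{1,p}(\Omega)$, so passing to a subsequence $u_n\rightharpoonup u$ in $W^{1,p}(\Omega)$, $u_n\to u$ in $L^{s}(\Omega)$ for every $s<p^*$, and $u_n\to u$ a.e.\ in $\Omega$. Proposition \ref{Msec2} then upgrades this to $\nabla u_n\to \nabla u$ a.e.\ in $\Omega$. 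Combining this with Lemma \ref{H-prop}(ii) and the homogeneity of $H$, a standard dominated-convergence argument allows one to pass to the limit in $\langle J'(u_n),v\rangle\to 0$ for every $v\in W^{1,p}(\Omega)$, so $u$ is a critical point of $J$.

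Next, I would set $v_n:=u_n-u$, so that $v_n\rightharpoonup 0$. Applying the Brezis--Lieb lemma to the $L^{p^*}$-norm, and its anisotropic analogue to the Finsler energy (justified by the a.e.\ convergence of $\nabla u_n$), yields
\begin{equation*}
\int_\Omega H^p(\nabla u_n)\,dx=\int_\Omega H^p(\nabla u)\,dx+\int_\Omega H^p(\nabla v_n)\,dx+o(1),
\end{equation*}
\begin{equation*}
\int_\Omega (u_n^+)^{p^*}\,dx=\int_\Omega (u^+)^{p^*}\,dx+\int_\Omega (v_n^+)^{p^*}\,dx+o(1),
\end{equation*}
while $\int_\Omega (u_n^+)^{q}\,dx\to\int_\Omega (u^+)^{q}\,dx$ by Rellich--Kondrachov. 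Combining $\langle J'(u_n),u_n\rangle\to 0$ with $\langle J'(u),u\rangle=0$ yields $\int_\Omega H^p(\nabla v_n)\,dx-\int_\Omega(v_n^+)^{p^*}\,dx\to 0$; denoting the common limit by $L\geq 0$, a direct computation gives $c=\lim J(u_n)=J(u)+(1/p-1/p^*)L=J(u)+L/N$. Since $L\geq 0$, this already yields $J(u)\leq c$.

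The main obstacle is to rule out the possibility $u\equiv 0$, and this is where the sub-threshold hypothesis enters. Assuming by contradiction $u\equiv 0$, we would have $L=Nc>0$, and I would perform a concentration-compactness analysis in the spirit of Wang \cite{Wang}: cover $\overline{\Omega}$ by finitely many balls of small radius and localize $v_n$ by smooth cut-offs, so that each piece concentrates either in the interior (where Lemma \ref{L21}(i) applies with Sobolev constant $S_{\Sigma,H}$) or near $\partial\Omega$, where one straightens the boundary by a $C^1$ diffeomorphism whose Jacobian approaches the identity on fine scales and invokes Lemma \ref{L21}(ii) with the reduced constant $2^{-p/N}S_{\Sigma,H}$. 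Combined with the norm equivalence \eqref{norm-eqv} (source of the constant $\tilde{C}$) and raising to the power $N/p$, this forces the bound $L\geq (2\tilde{C})^{-1}S_{\Sigma,H}^{N/p}$ whenever $L>0$. Substituting into $c=L/N$ would give $c\geq \frac{1}{2N\tilde{C}}S_{\Sigma,H}^{N/p}$, contradicting the hypothesis. Therefore $u\not\equiv 0$; testing $\langle J'(u),v\rangle=0$ with $v=u^{-}$ yields $u^{-}\equiv 0$, so $u\geq 0$ is a non-trivial critical point of $J$ with $J(u)\leq c$, providing the required solution of $(P)$.
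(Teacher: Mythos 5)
Your proposal is correct and follows essentially the same route as the paper: mountain pass geometry plus Lemma \ref{bound} to get a bounded $(PS)_c$ sequence, Proposition \ref{Msec2} for a.e.\ gradient convergence to identify the weak limit as a critical point, a partition-of-unity/boundary-straightening argument based on Lemma \ref{L21}(ii) to show that $u\equiv 0$ would force $c\geq \frac{1}{2N\tilde{C}}S_{\Sigma,H}^{N/p}$, and the Brezis--Lieb splitting to conclude $J(u)\leq c$. The only difference is cosmetic (you establish $J(u)\leq c$ before ruling out $u\equiv 0$, and phrase the vanishing case via $L=Nc$ rather than the paper's $l$), so nothing further is needed.
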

\begin{proof}
By Mountain Pass Theorem without $(PS)$ condition, there exists a sequence $(u_n)$ such that $J(u_n) \to c$ and $J'(u_n) \to 0$. Thanks to Lemma \ref{bound}, $(u_n)$ is bounded and, up to a subsequence if necessary, by the Sobolev embedding and Proposition \ref{Msec2} we have
%Thanks to Lemma \ref{bound}, $(u_n)$ bounded and, as a consequence, up to a subsequence if necessary, by the Sobolev embedding and Proposition \ref{Msec2} we have
\begin{eqnarray}
	u_n \rightharpoonup u\,\, \mbox{in}\,\, W^{1,p}(\Omega)\\
	u_n \rightarrow u\,\,\mbox{in}\,\,L^{q}(\Omega),\,\,1\le q< p^* \label{convlq}\\
	\nabla u_n \rightarrow \nabla u \,\,\mbox{in}\,\,(L^{q}(\Omega))^N,\,\,\,\mbox{for any}\,\,q<p.\\
	u_n \rightarrow u\,\,a.e.\,\,\mbox{in}\,\,\Omega.
\end{eqnarray}Then a direct calculation shows that
%%%%$$\lim_{n \to +\infty}J'(u_n)\varphi=0\quad\forall\varphi \in W^{1,p}(\Omega),$$that is,
\begin{equation}
\int_{\Omega} \left[H(\nabla u)^{p-1}\langle \nabla H(\nabla u), \nabla v\rangle+|u|^{p-2}uv \right]dx -\lambda\int_{\Omega} (u^+)^{q-1} v dx-\int_{\Omega}(u^+)^{p^*-1}v dx=0,
\end{equation}for all $v\in W^{1,p}(\Omega),$ 
%%%%%%\begin{eqnarray}
%%%%%%	0&=&\lim_{n \to +\infty}J'(u_n)\varphi\nonumber\\
%%%%%%	&=&\int_{\Omega} \left[H(\nabla u)^{p-1}\langle \nabla_{\xi} H(\nabla u), \nabla \varphi\rangle+|u|^{p-2}u\varphi \right]dx -\\ &&\lambda\int_{\Omega} |u|^{q-2}u \varphi dx-\int_{\Omega}|u|^{p^*-2}u\varphi dx,\, \forall \varphi \in W^{1,p}(\Omega),\nonumber
%%%%%%\end{eqnarray}
that is, $u$ is a critical point of $J$. To see that $u \not \equiv 0,$ we first recall that
\begin{enumerate}
	\item[(i)] $J'(u_n)u_n=\int_{\Omega} \left[H(\nabla u_n)^{p}+|u_n|^{p} \right]dx - \lambda\int_{\Omega} (u_n^+)^{q} dx-\int_{\Omega}(u_n^+)^{p^*}dx \to 0;$
	\item[(ii)] $J(u_n)=\frac{1}{p}\int_{\Omega} \left[H(\nabla u_n)^{p}+|u_n|^{p} \right]dx - \frac{\lambda}{q}\int_{\Omega} (u_n^+)^{q} dx-\frac{1}{p^*}\int_{\Omega}(u_n^+)^{p^*}dx \to c.$
\end{enumerate}
Suppose, by contradiction, $u \equiv 0 $. We have
$$\int_{\Omega} \left[H(\nabla u_n)^{p}+|u_n|^{p} \right]dx \rightarrow l > 0$$ since $c>0$. From (\ref{convlq}) we obtain
$$\int_{\Omega} (u_n^+)^{q} dx \rightarrow 0 \quad\mbox{and}\quad\int_{\Omega}(u_n^+)^{p^*}dx\rightarrow l,$$
%and
%$$\int_{\Omega}|u_n|^{p^*}dx\rightarrow l,$$
%hence
what implies
\begin{equation}\label{lc}
	\left(\frac{1}{p}-\frac{1}{p^*}\right)l=c.
\end{equation}
Let $\varepsilon>0$ be given. Choose $(\varphi_{\alpha})_{\alpha=1}^{m}$ a unity of partition on $\overline{\Omega}$ with diameter, $diam(\mbox{supp\,} \varphi_{\alpha})<\delta,$ for each $\alpha$. Since $\Omega$ is a domain of class  $C^1$, we have
$$\int_{\Omega}\left[|H(\nabla(u\varphi_{\alpha}))|^p+|u \varphi_{\alpha}|^p\right]dx \ge \left(2^{-p/N}S_{\Sigma, H}-\varepsilon\right)\int_{\Omega}|u \varphi_{\alpha}|^{p^*}dx,$$%\,\mbox{for all}\, \alpha=1,2,\cdots,m\,\mbox{and}\, u\in W.$$
for all $\alpha=1,2,\cdots,m,$ and all $u\in W^{1,p}(\Omega).$ Therefore
\begin{equation}\label{emb1}
	\begin{array}{lcl}
		\left[\int_{\Omega}|u_n|^{p^*}dx\right]^{p/p^*}&\le& \left[\int_{\Omega}\sum_{\alpha=1}^{m}\varphi_{\alpha}^{p^*/p}|u_n|^{p^*}dx\right]^{p/p^*}\le \sum_{\alpha=1}^{m}\left[\int_{\Omega}|\varphi_{\alpha}^{1/p}u_n|^{p^*}dx\right]^{p/p^*}\\
		&\le&\left(2^{-p/N}S_{\Sigma, H}-\varepsilon\right)^{-1}\sum_{\alpha=1}^{m}\left[\int_{\Omega}|H(\nabla(\varphi_{\alpha}^{1/p}u_n))|^{p}dx\right]\\
		&\le&\left(2^{-p/N}S_{\Sigma, H}-\varepsilon\right)^{-1}C_1\sum_{\alpha=1}^{m}\left[\int_{\Omega}|\nabla(\varphi_{\alpha}^{1/p}u_n)|^{p}dx\right].
	\end{array}
\end{equation}
Now on the one hand, since $v=\sum_{\alpha=1}^{m}\varphi_{\alpha} v$ and $\sum_{\alpha=1}^{m}\varphi_{\alpha} =1$, we obtain
$$
\begin{array}{lcl}
	\sum_{\alpha=1}^{m}\left[\int_{\Omega}|\nabla(\varphi_{\alpha}^{1/p}u_n)|^{p}dx\right]&=&\ \sum_{\alpha=1}^{m}\int_{\Omega}|\varphi_{\alpha}^{1/p}\nabla u_n+u_n \nabla(\varphi_{\alpha}^{1/p})|^{p}dx\\
	&\le & 2^{p-1}\sum_{\alpha=1}^{m} \int_{\Omega} \left(|\varphi_{\alpha}||\nabla u_n|^p+|u_n|^pp^{-p} \varphi_{\alpha}^{1-p}|\nabla\varphi_{\alpha}|^{p}\right)dx\\
	&\le &C \int_{\Omega} \left(\sum_{\alpha=1}^{m}\varphi_{\alpha}|\nabla u_n|^p+\sum_{\alpha=1}^{m}|u_n|^p \varphi_{\alpha}\left(\frac{|\nabla\varphi_{\alpha}|}{\varphi_{\alpha}}\right)^p\right)dx\\
	&\le &C \int_{\Omega} \Big(|\nabla u_n|^p+\sum_{\alpha=1}^{m}|u_n|^p \varphi_{\alpha}\Big)dx\\
	&\le &C \int_{\Omega} \left(|\nabla u_n|^p+|u_n|^p \right)dx.
\end{array}$$
On the other hand, by Young's inequality we have
\begin{equation}\label{emb2}
	\begin{array}{lcl}
		\sum_{\alpha=1}^{m}\Big[\int_{\Omega}|\nabla(\varphi_{\alpha}^{1/p}u_n)|^{p}dx\Big] &\le &C(1+\varepsilon) \int_{\Omega} |\nabla u_n|^pdx+C(\varepsilon)\int_{\Omega}|u_n|^pdx\\
		&\le&C(1+\varepsilon) \int_{\Omega} |\nabla u_n|^pdx+o(n)\\
		&\le &\tilde{C}(1+\varepsilon) \int_{\Omega} |H(\nabla u_n)|^pdx+o(n).
	\end{array}
\end{equation}
Combining (\ref{emb1}) and (\ref{emb2}), it follows that
\begin{equation}\label{emb3}
\left[\int_{\Omega}|u_n|^{p^*}dx\right]^{p/p^*}\le\tilde{C}(1+\varepsilon)\left(2^{-p/N}S_{\Sigma, H}-\varepsilon\right)^{-1} \int_{\Omega} \big[|H(\nabla u_n)|^p+| u_n|^p\big]dx+o(n).
%%%%%%%\begin{aligned}
%%%%%%%\left[\int_{\Omega}|u_n|^{p^*}dx\right]^{p/p^*}&\le \tilde{C}(1+\varepsilon)\left(2^{-p/n}S_{\Sigma, H}-\varepsilon\right)^{-1} \int_{\Omega} |H(\nabla u_n)|^pdx+o(n)\\
%%%%%%%		&\le\tilde{C}(1+\varepsilon)\left(2^{-p/n}S_{\Sigma, H}-\varepsilon\right)^{-1} \int_{\Omega} \left[|H(\nabla u_n)|^p+| u_n|^p
%%%%%\right]dx+o(n).
%%%%%\end{aligned}
\end{equation}
%%%%%%\begin{equation}\label{emb3}
%%%%%%	\begin{array}{lcl}
%%%%%%	\qquad	\left[\int_{\Omega}|u_n|^{p^*}dx\right]^{p/p^*}&\le& \tilde{C}(1+\varepsilon)\left(2^{-p/n}S_{\Sigma, H}-\varepsilon\right)^{-1} \int_{\Omega} |H(\nabla u_n)|^pdx+o(n)\\
%%%%%%		&\le& \tilde{C}(1+\varepsilon)\left(2^{-p/n}S_{\Sigma, H}-\varepsilon\right)^{-1} \int_{\Omega} \left[|H(\nabla u_n)|^p+| u_n|^p\right]dx+o(n)
%%%%%%	\end{array}
%%%%%%\end{equation}
Letting $n\rightarrow+\infty$, we obtain %By (\ref{lc}), the limits $\int_{\Omega} \left[|H(\nabla u_n)|^p+| u_n|^p\right]dx \rightarrow l$ and  $\int_{\Omega}|u_n|^{p^*}dx \rightarrow l,$ we obtain
%%%%%%%%$$l^{p/p^*}\le \tilde{C}(1+\varepsilon)\left(2^{-p/n}S_{\Sigma, H}-\varepsilon\right)^{-1}l,$$
%%%%%%%%then
$$l\ge \tilde{C}^{-1}\left[\frac{\left(2^{-p/N}S_{\Sigma, H}-\varepsilon\right)}{(1+\varepsilon)}\right]^{n/p}.$$Taking (\ref{lc}) into account,  we deduce
\begin{equation}\label{cestimate}
	c> \tilde{C}^{-1}\frac{1}{N}\left[\frac{\left(2^{-p/N}S_{\Sigma, H}-\varepsilon\right)}{(1+\varepsilon)}\right]^{N/p}.
\end{equation}Letting $\varepsilon\rightarrow0,$ we obtain from \eqref{cestimate}
$$c\le\frac{1}{2N}\left(\frac{S_{\Sigma, H}}{\tilde{C}}\right),$$a contradiction to the choice of $c$. Hence 
$u\not \equiv0.$ Finally, to see that $J(u)\leq c,$ choosing  $v_n=u_n-u$ in $J'(u_n)\rightarrow0$,  from Brezis-Lieb lemma we have %Finally, from (ii) above we have $J(u)\leq c$.
$$
\begin{aligned}
\int_{\Omega} \left[H(\nabla u_n)^{p}+|u_n|^{p} \right]dx&=\int_{\Omega} \left[H(\nabla v_n)^{p}+|v_n|^{p} \right]dx + \int_{\Omega} \left[H(\nabla u)^{p}+|u|^{p} \right]dx+o(1)\\
\int_{\Omega}|u_n|^{p^*}dx&=\int_{\Omega}|v_n|^{p^*}dx+\int_{\Omega}|u|^{p^*}dx+o(1).
\end{aligned}
$$From (ii) we have
$$J(u)+\frac{1}{p}\int_{\Omega} \left[H(\nabla v_n)^{p}+|v_n|^{p} \right]dx-\frac{1}{p^*}\int_{\Omega}|v_n|^{p^*}dx=c+o(1),$$so, since
$$\int_{\Omega} \left[H(\nabla v_n)^{p}+|v_n|^{p} \right]dx-\int_{\Omega}|v_n|^{p^*}dx=o(1),$$it follows that
$$J(u)+\left(\frac{1}{p}-\frac{1}{p^*}\right)\int_{\Omega} \left[H(\nabla v_n)^{p}+|v_n|^{p} \right]dx=c+o(1).$$Therefore $J(u)\leq c,$ and the proof is complete.
\end{proof}

In the next lemma we collect some estimates and  asymptotic limits on quantities involving the extremal functions \eqref{extrem-funct}. %of $K_i(\varepsilon),\,i=1,2,3.$ 
\begin{lemma}\label{Ident} Set the extremal function $u_{\varepsilon}(x)=U^{H}_{\varepsilon{\frac{p-1}{p}}, x_0}(x).$ Then the following estimates and asymptotic limits hold.
\begin{eqnarray}
K_1(\varepsilon)&:=&\int_{\Omega}|H(\nabla u_{\varepsilon})|^pdx=\int_{\R^{N}_{+}}|H(\nabla u_{\varepsilon})|^pdx-I_\varepsilon+o(\varepsilon^{\frac{p-1}{p}})\\
	K_2(\varepsilon)&:=&\int_{\Omega}| u_{\varepsilon}|^{p^*}dx=\int_{\R^{N}_{+}}| u_{\varepsilon}|^{p^*}dx-II_\varepsilon+O(\varepsilon^{\frac{p-1}{p}})\\\nonumber\\
	K_3(\varepsilon)&:=&\int_{\Omega}| u_{\varepsilon}|^{p}dx=\left\{\begin{array}{lcl}
	O(|\varepsilon^{\frac{N-p}{p}}\mbox{\textnormal{ln\,}} \varepsilon|)=O(|\varepsilon^{p-1}\mbox{\textnormal{ln\,}} \varepsilon|),\, N=p^2,\\ 
	O(\varepsilon^{\frac{N-p}{p}})=O(\varepsilon^{\frac{(p-1)^2}{p}}),\, N=p^2-p+1,\\
	O(\varepsilon^{p-1}),\, N>p^2,\\
\end{array}
	\right.
\end{eqnarray}
where 
\begin{equation}
	I_\varepsilon=\int_{\R^{N-1}}dx'\int_{0}^{g(x')}|H(\nabla u_{\varepsilon})|^pdx_n\quad\mbox{and}\quad II_\varepsilon=\int_{\R^{N-1}}dx'\int_{0}^{g(x')}| u_{\varepsilon}|^{p^*}dx_n.
\end{equation}
%%%%%%%\begin{equation}
%%%%%%%	II_\varepsilon=\int_{\R^{N-1}}dx'\int_{0}^{g(x')}| u_{\varepsilon}|^{p^*}dx_n
%%%%%%%\end{equation}
Furthermore,
\begin{equation}\label{Iest1}
	\lim_{\varepsilon \to 0} \varepsilon^{-(\frac{p-1}{p})}I_\varepsilon=\left(\frac{n-p}{p-1}\right)^p\int_{\R^{N-1}} \frac{|y'|^{p/p-1}g(y')dy'}{\left(1+|y'|^{p/p-1} \right)^N}
\end{equation}

\begin{equation}
	\lim_{\varepsilon \to 0} \varepsilon^{-(\frac{p-1}{p})}II_\varepsilon=\int_{\R^{N-1}} \frac{g(y')dy'}{\left(1+|y'|^{p/p-1}\right)^N}
\end{equation}
\end{lemma}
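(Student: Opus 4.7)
The plan is to reduce each of $K_1,K_2,K_3$ and the corrections $I_\varepsilon,II_\varepsilon$ to explicit one-variable integrals in $\hat H_0(x-x_0)$, and then extract the dominant orders by a single scaling and dominated convergence. The starting point is a closed form for $H(\nabla u_\varepsilon)$: differentiating the explicit expression of $u_\varepsilon = U^{H}_{\varepsilon^{(p-1)/p},x_0}$ and using Lemma \ref{H-prop}(iv), which gives $H(\nabla \hat H_0)\equiv 1$, together with the one-homogeneity of $H$, yields
\begin{equation*}
|H(\nabla u_\varepsilon)|^p = c_{1}\,\varepsilon^{N/p}\,\frac{\hat H_0(x-x_0)^{p/(p-1)}}{\bigl(\varepsilon + \hat H_0(x-x_0)^{p/(p-1)}\bigr)^N},\qquad |u_\varepsilon|^{p^*}=c_{2}\,\frac{\varepsilon^{N/p}}{\bigl(\varepsilon+\hat H_0(x-x_0)^{p/(p-1)}\bigr)^N},
\end{equation*}
with explicit constants $c_1,c_2$ depending only on $N$ and $p$.

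For the identities defining $K_1(\varepsilon)$ and $K_2(\varepsilon)$, I would flatten $\partial\Omega$ near $x_0$ via local $C^1$ coordinates so that $\Omega$ agrees with $\R^{N}_+$ except along the slab $\{x'\in\R^{N-1},\,0<x_n<g(x')\}$, where $g$ and $\nabla g$ vanish at $0$. A cut-off based decomposition then writes $\int_{\Omega}=\int_{\R^{N}_+}-I_\varepsilon+\mathcal R_\varepsilon$ (resp.\ $-II_\varepsilon+\mathcal R'_\varepsilon$), where the far-field remainders $\mathcal R_\varepsilon,\mathcal R'_\varepsilon$ come from integration outside a fixed neighbourhood of $x_0$; the pointwise bounds on $u_\varepsilon$ and $\nabla u_\varepsilon$ there produce $O(\varepsilon^{(N-p)/p})$ contributions, which are $o(\varepsilon^{(p-1)/p})$ under the dimensional hypothesis used in Theorem \ref{Main}.

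The estimate for $K_3(\varepsilon)$ then follows from the single change of variables $x-x_0=\varepsilon^{(p-1)/p}y$, which normalises the concentrating profile: $\varepsilon+\hat H_0(x-x_0)^{p/(p-1)}=\varepsilon\bigl(1+\hat H_0(y)^{p/(p-1)}\bigr)$, and tracking Jacobians gives $K_3(\varepsilon)=C\varepsilon^{p-1}\int_{\varepsilon^{-(p-1)/p}(\Omega-x_0)}(1+\hat H_0(y)^{p/(p-1)})^{-(N-p)}\,dy$. By \eqref{norm-eqv} the integrand decays like $|y|^{-p(N-p)/(p-1)}$, so integrability on $\R^N$ is governed by the threshold $p(N-p)/(p-1)=N$, i.e.\ $N=p^2$. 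Above this threshold the integral converges and $K_3=O(\varepsilon^{p-1})$; at it a logarithmic divergence over the rescaled domain of radius $\sim\varepsilon^{-(p-1)/p}$ produces the factor $|\ln\varepsilon|$; below it the polynomial tail on a ball of that radius dominates, giving $K_3=O(\varepsilon^{(N-p)/p})$.

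Finally, the limits for $\varepsilon^{-(p-1)/p}I_\varepsilon$ and $\varepsilon^{-(p-1)/p}II_\varepsilon$ come from rescaling only the tangential variables, $x'=\varepsilon^{(p-1)/p}y'$. Since the slab thickness $g(x')$ is $o(|x'|)$ as $x'\to 0$, the inner $x_n$-integral is approximated by $g(x')$ times the value of the integrand at $x_n=0$, with the error absorbed in $o(\varepsilon^{(p-1)/p})$. After the $y'$-rescaling and using the norm equivalence to pass from $\hat H_0$ to $|\cdot|$, the remaining integrand is dominated by an integrable function, so the Taylor expansion of $g$ together with dominated convergence yields the stated limits. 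The main technical obstacle is the bookkeeping of orders in the three dimensional regimes for $K_3$, combined with verifying that the far-field and inner-slab corrections in $K_1,K_2$ remain of order $o(\varepsilon^{(p-1)/p})$ and thus do not interfere with the leading asymptotics.
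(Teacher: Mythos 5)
Your strategy is sound and is genuinely different from --- in fact far more explicit than --- what the paper does: the paper's entire proof of this lemma is the observation that $a|\xi|\le H(\xi)\le b|\xi|$ together with a citation of the isotropic computations in Wang and Kou, whereas you compute directly with the anisotropic extremal, exploiting the closed form of $U^{H}_{\lambda,x_0}$, the duality relation $H(\nabla H_0)\equiv 1$ from Lemma \ref{H-prop}(iv), and the single scaling $x-x_0=\varepsilon^{(p-1)/p}y$ that turns $\varepsilon+\hat H_0(x-x_0)^{p/(p-1)}$ into $\varepsilon\bigl(1+\hat H_0(y)^{p/(p-1)}\bigr)$. This buys something real: the norm-equivalence reduction can only deliver two-sided $O$-estimates (adequate for $K_3$), not the exact limits for $\varepsilon^{-(p-1)/p}I_\varepsilon$ and $\varepsilon^{-(p-1)/p}II_\varepsilon$, which require the precise profile. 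Your threshold analysis for $K_3$ (decay exponent $p(N-p)/(p-1)$ against $N$, i.e.\ the critical dimension $N=p^2$) is exactly right and reproduces all three regimes.

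Two points need fixing. First, a computational slip: the prefactor in your formula for $|H(\nabla u_\varepsilon)|^p$ should be $\varepsilon^{(N-p)/p}$, not $\varepsilon^{N/p}$ (only $|u_\varepsilon|^{p^*}$ carries $\varepsilon^{N/p}$); one checks this by noting that $\int_{\R^N}H(\nabla u_\varepsilon)^p\,dx$ must be independent of $\varepsilon$. As written, your $I_\varepsilon$ would come out one full power of $\varepsilon$ too small, and the limit \eqref{Iest1} would be lost. Second, and more substantively, your direct computation of the limits naturally produces $\hat H_0(y',0)$ in the integrands, and ``using the norm equivalence to pass from $\hat H_0$ to $|\cdot|$'' yields only two-sided bounds, not the equality with $|y'|$ asserted in \eqref{Iest1}; so as written your argument does not establish the stated formula for a general norm $H$ (the same criticism applies to the paper's own reduction, and the limits are ultimately used only through the ratio $I_\varepsilon/II_\varepsilon$, but the gap should be acknowledged rather than absorbed into ``dominated convergence''). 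Finally, your claim that the far-field remainder $O(\varepsilon^{(N-p)/p})$ is $o(\varepsilon^{(p-1)/p})$ fails in the borderline case $N=p^2-p+1$, $p=2$, where the two exponents coincide; there the remainder is only $O(\varepsilon^{(p-1)/p})$, which is still compatible with the statement for $K_2$ but not with the $o(\varepsilon^{(p-1)/p})$ claimed for $K_1$.
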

\begin{proof}
	Since there exist $a,b>0$ such that $a|\xi|\le H(\xi)\le b|\xi|$ for all $\xi\in\mathbb{R}^N$, the proof of Lemma \ref{Ident} follows the same lines in the arguments in \cite{Wang,Kou}, and will be omitted.  %by the same argues of \cite{Kou}.
\end{proof}

\begin{remark}
	Note that
	$$\frac{K_1}{[K_2]^{p/p^*}}=S_{\Sigma, H},$$
	where $K_1=\int_{\R^N}|H(\nabla u_{\varepsilon})|^pdx$ and $K_2=\int_{\R^N}|u_{\varepsilon}|^{p^*}dx$, then
	$$K_1(\varepsilon)=\frac{1}{2}K_1-I_{\varepsilon}+o(\varepsilon^{p-1/p}),$$
	$$K_2(\varepsilon)=\frac{1}{2}K_2-II_{\varepsilon}+O(\varepsilon^{p-1/p}).$$
\end{remark}

\begin{lemma}\label{RK2}
	Assume the hypotheses in Theorem $\ref{Main}$ hold. Then there exists at least one non-negative function $u \in W^{1,p}(\Omega)\setminus\{0\}$ such that
	\begin{equation}\label{Main estimate}
		\sup_{t\ge 0}J(tu)<\frac{1}{2N\tilde{C}}S_{\Sigma,H}^{N/p}.
	\end{equation}
	
\end{lemma}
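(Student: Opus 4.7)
My plan is to take $u = u_\varepsilon$, the extremal function from Lemma \ref{Ident}, and estimate $\sup_{t \geq 0} J(tu_\varepsilon)$ directly. Since $u_\varepsilon \geq 0$ and $u_\varepsilon \not\equiv 0$, it is an admissible test function. By Lemma \ref{MP Geometry}, the map $t \mapsto J(tu_\varepsilon)$ equals $0$ at $t=0$, is positive on some interval $(0,T)$, and tends to $-\infty$ as $t\to\infty$; hence its supremum is attained at some $t_\varepsilon > 0$ satisfying
$$t_\varepsilon^{p-1}\bigl[K_1(\varepsilon) + K_3(\varepsilon)\bigr] = \lambda t_\varepsilon^{q-1}\int_\Omega u_\varepsilon^q\,dx + t_\varepsilon^{p^*-1}\, K_2(\varepsilon). \qquad (\ast)$$
A short analysis of $(\ast)$, combined with $K_i(\varepsilon) \to K_i/2 > 0$ ($i=1,2$) and $K_3(\varepsilon), \int_\Omega u_\varepsilon^q\,dx \to 0$ as $\varepsilon \to 0$, will show that $t_\varepsilon$ remains bounded and bounded away from zero, uniformly for $\varepsilon$ small (with $\lambda$ fixed).

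Next I apply the elementary identity $\sup_{t\geq 0}\bigl[\tfrac{A}{p}t^p - \tfrac{B}{p^*}t^{p^*}\bigr] = \tfrac{1}{N}A^{N/p}/B^{(N-p)/p}$ with $A = K_1(\varepsilon)+K_3(\varepsilon)$ and $B = K_2(\varepsilon)$; subtracting the nonnegative $\lambda$-term contribution yields
$$\sup_{t \geq 0} J(tu_\varepsilon) \;\leq\; \frac{1}{N}\frac{[K_1(\varepsilon)+K_3(\varepsilon)]^{N/p}}{K_2(\varepsilon)^{(N-p)/p}} - \frac{\lambda t_\varepsilon^q}{q}\int_\Omega u_\varepsilon^q\,dx.$$
Since $u_\varepsilon$ is extremal for the Sobolev inequality on $\Sigma$, one has $K_1/K_2^{p/p^*} = S_{\Sigma,H}$, equivalently $K_1^{N/p}/K_2^{(N-p)/p} = S_{\Sigma,H}^{N/p}$. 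A Taylor expansion of the quotient based on the asymptotics of Lemma \ref{Ident} gives
$$\frac{1}{N}\frac{[K_1(\varepsilon)+K_3(\varepsilon)]^{N/p}}{K_2(\varepsilon)^{(N-p)/p}} = \frac{1}{2N}\,S_{\Sigma,H}^{N/p} + R(\varepsilon),$$
where $R(\varepsilon)$ collects the first-order contributions coming from $-I_\varepsilon$, from $-II_\varepsilon$ (through the denominator), and from $K_3(\varepsilon)$. Under the hypothesis $p\geq 2$ with either $N\geq p^2$ or $N=p^2-p+1$, each such contribution is $O(\varepsilon^{(p-1)/p})$, so $R(\varepsilon) = O(\varepsilon^{(p-1)/p})$.

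Finally, a scaling argument applied to the explicit form of $u_\varepsilon$ produces a lower bound $\int_\Omega u_\varepsilon^q\,dx \geq c\,\varepsilon^{\gamma(q)}$ with $\gamma(q) = (p-1)(Np-q(N-p))/p^2 \in (0,p-1)$. Combining with the previous step,
$$\sup_{t \geq 0} J(tu_\varepsilon) \leq \frac{1}{2N}\,S_{\Sigma,H}^{N/p} + O(\varepsilon^{(p-1)/p}) - c\,\lambda\,\varepsilon^{\gamma(q)},$$
and, by fixing $\varepsilon$ small and then taking $\lambda$ sufficiently large (with the precise relation between $\varepsilon$ and $\lambda$ depending on whether $\gamma(q)<(p-1)/p$ or not, i.e.\ whether $q>p(N-1)/(N-p)$), the negative $\lambda$-term dominates the positive remainder with enough margin to drive the whole quantity strictly below $\tfrac{1}{2N\tilde{C}}S_{\Sigma,H}^{N/p}$. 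The main obstacle is the sign of $R(\varepsilon)$: while the $-I_\varepsilon$ piece is helpful, the pieces coming from $-II_\varepsilon$ and from $K_3(\varepsilon)$ enter with positive sign and cannot be assumed to cancel, so they must be beaten by the $\lambda$-term. The dimensional restriction $N\geq p^2$ or $N = p^2-p+1$ is precisely what keeps $K_3(\varepsilon)$ of order no worse than $\varepsilon^{(p-1)/p}$, making the absorption argument possible.
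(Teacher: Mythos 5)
Your skeleton (testing with $u_\varepsilon$, bounding $t_\varepsilon$ away from $0$ and $\infty$, using $\sup_{t\ge0}\bigl[\tfrac{A}{p}t^p-\tfrac{B}{p^*}t^{p^*}\bigr]=\tfrac1N A^{N/p}B^{-(N-p)/p}$ and subtracting the $\lambda$-term) matches the paper's, but you stop exactly where the paper's real work begins. The paper does \emph{not} leave the first-order correction $R(\varepsilon)$ sign-indefinite: the heart of the case $N\ge p^2$ is the estimate \eqref{k-estimate}, which after the Taylor expansion you describe reduces to $\tfrac{I_\varepsilon}{II_\varepsilon}>\tfrac{N-p}{N}\tfrac{K_1}{K_2}+o(1)$, and this strict inequality is then verified by explicit computation: via the integral identity \eqref{int-est3} one finds $\lim_{\varepsilon\to0}I_\varepsilon/II_\varepsilon=\tfrac{(N-p)^p(N+1)}{(p-1)^{p-1}[N-(2p-1)]}$, while $\tfrac{(N-p)K_1}{NK_2}=\tfrac{(N-p)^p}{(p-1)^{p-1}}$, and the former is strictly larger. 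Hence the net boundary correction is strictly \emph{negative} of exact order $\varepsilon^{(p-1)/p}$ (of order $\varepsilon^{(p-1)/p}|\ln\varepsilon|$ when $N=p^2-p+1$), and the conclusion follows for $\varepsilon$ small without any help from the $\lambda$-term. The dimensional hypotheses serve to make $K_3(\varepsilon)$ of strictly lower order than this gain, not merely $O(\varepsilon^{(p-1)/p})$.

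Your substitute mechanism --- beating a sign-indefinite $O(\varepsilon^{(p-1)/p})$ remainder by $-c\lambda\varepsilon^{\gamma(q)}$ --- does not close. For fixed $\lambda$ it works only when $\gamma(q)<(p-1)/p$, i.e. $q>p(N-1)/(N-p)$, leaving the range $p<q\le p(N-1)/(N-p)$ uncovered. The fallback of fixing $\varepsilon$ and sending $\lambda\to\infty$ is incompatible with the uniform lower bound $t_\varepsilon\ge C_0$ that you need in order to write the term $-c\lambda\varepsilon^{\gamma(q)}$ in the first place: from your criticality equation one gets $t_\varepsilon\le\bigl(A/(\lambda D)\bigr)^{1/(q-p)}$ with $D=\int_\Omega u_\varepsilon^q\,dx$, so $\lambda t_\varepsilon^q\le C\lambda^{-p/(q-p)}\to0$ as $\lambda\to\infty$; the ``negative $\lambda$-term'' in your final display therefore tends to zero rather than dominating. (It is true that $\sup_{t\ge0}J(tu)\to0$ as $\lambda\to\infty$ for any fixed $u$, which would trivially give the lemma for large $\lambda$, but that is a different argument from the one you wrote and not the one the paper uses.) So there is a genuine gap precisely at the sign of $R(\varepsilon)$: the curvature computation comparing $\lim I_\varepsilon/II_\varepsilon$ with $(N-p)K_1/(NK_2)$ is the missing, and essential, ingredient.
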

\begin{proof}
	Let
	 $Y_{\varepsilon}=J(t_{\varepsilon}u_{\varepsilon})=\sup_{t>0}J(tu_{\varepsilon}).$ Then we have
$$
\begin{aligned}
Y_{\varepsilon}&=\sup_{t>0}\left[\frac{1}{p}\int_{\Omega} \left[H(\nabla (tu_{\varepsilon}))^{p}+(tu_{\varepsilon})^{p} \right]dx - \frac{\lambda}{q}\int_{\Omega} t^qu_{\varepsilon}^{q} dx-\frac{1}{p^*}\int_{\Omega}t^{p^*}u_{\varepsilon}^{p^*}dx\right]\\
&=\sup_{t>0}\left[\frac{t^p}{p}\left(K_1(\varepsilon)+K_3(\varepsilon)\right)-\frac{\lambda t^q}{q} u_\varepsilon^qdx -\frac{t^{p^*}}{p^*}K_2(\varepsilon)\right]\\
&\le \frac{1}{N}\left[\frac{K_1(\varepsilon)+K_3(\varepsilon)}{K_2(\varepsilon)^{\frac{N-p}{N}}}\right]^{N/p}\!\!-\lambda\frac{C_0^q}{q}\int_{\Omega}u_{\varepsilon}^qdx,
\end{aligned}
$$for all $C_0<t\le C_1,$ for some $C_0,C_1>0$. Such $C_1$ there exists because of the geometry of $J$, and the existence of $C_0$ is true, otherwise, we could find $\varepsilon_n \to 0$  with $t_{\varepsilon_n} \to 0$, as $n \to \infty$. So, up to a subsequence, we have  $t_{\varepsilon_n}u_{\varepsilon_n} \to 0$ and
%where $C_0>0$ is such that $t\le C_0$. Note that such $C_0$ there exists, otherwise we could find $\varepsilon_n \to 0,$ as $n \to \infty$ with $t_{\varepsilon_n} \to 0$ as $n \to \infty$. Then, up to a subsequence, we have  $t_{\varepsilon_n}u_{\varepsilon_n} \to 0$. Hence,
$$0<c\le \sup_{t>0}J(tu_{\varepsilon})=J(t_{\varepsilon_n}u_{\varepsilon_n})\to 0,$$
which is a contradiction. We shall split the proof into two cases. %Let us first assume $N \ge p^2$.
 
\bigskip

\noindent\mbox{\scriptsize$\bullet$}\quad\underline{Case  $N \ge p^2$}. We have $K_3(\varepsilon)=O(\varepsilon^{\frac{p-1}{p}}),$ 
and there exists $\varepsilon_0>0$ such that
$$K_2(\varepsilon) \le C_1,\,\,K_1(\varepsilon)+K_3(\varepsilon) \le C_2,\quad\forall \varepsilon \in (0, \varepsilon_0).$$
Therefore
$$Y_{\varepsilon}\le\frac{1}{N}\left[\frac{K_1(\varepsilon)}{K_2(\varepsilon)^{\frac{N-p}{N}}}\right]^{N/p}-\lambda\frac{C_0^q}{q}\int_{\Omega}u_{\varepsilon}^qdx.$$
\begin{claim}\textnormal{The following estimate holds}
	\begin{equation}\label{k-estimate}
	\frac{K_1(\varepsilon)}{K_2(\varepsilon)^{\frac{N-p}{N}}} < 2^{-p/N}S_{\Sigma,H}+o(\varepsilon^{\frac{p-1}{p}}).
	\end{equation}
\end{claim}
\noindent Indeed, since 
$$\left[\frac{K_1}{K_2^{\frac{N-p}{N}}}\right]^{N/p}=S_{\Sigma,H}^{\frac{N}{p}},$$
we obtain
\begin{equation}
	\frac{K_1(\varepsilon)}{K_2(\varepsilon)^{\frac{N-p}{N}}} < \frac{1}{2}\frac{K_1}{\left(\frac{1}{2}K_2\right)^{\frac{N-p}{N}}}+o(\varepsilon^{\frac{p-1}{p}}),
\end{equation}
in a such way that
$$K_1(\varepsilon)\left(\frac{1}{2}K_2\right)^{\frac{N-p}{N}}<\frac{1}{2}K_1K_2(\varepsilon)^{\frac{N-p}{N}}+o(\varepsilon^{\frac{p-1}{p}}).$$
By Lemma \ref{Ident}, we have
\begin{equation}\label{est-k}
\qquad\quad\bigg[\frac{1}{2}K_1-I_{\varepsilon}+o(\varepsilon^{\frac{p-1}{p}})\bigg]\Big(\frac{1}{2}K_2\Big)^{\frac{N-p}{N}}\!\!<\frac{1}{2}K_1\left[\frac{1}{2}K_2-II_{\varepsilon}+o(\varepsilon^{\frac{p-1}{p}})\right]^{\frac{N-p}{p}}\!\!+o(\varepsilon^{\frac{p-1}{p}}).
\end{equation}
Now using the inequality $(a-b)^{\alpha}\le a^{\alpha}-\alpha a^{\alpha-1}b$ for all $b\leq a,$ and $\alpha<1,$ we estimate %Using Taylor Theorem
 %%%%%$$(a-b)^{\alpha}\le a^{\alpha}-\alpha a^{\alpha-1}b,\, \mbox{for}\,\alpha<1.$$We get
$$
\begin{aligned}
\left[\frac{1}{2}K_2-II_{\varepsilon}+o(\varepsilon^{\frac{p-1}{p}})\right]^{\frac{p}{p^*}} &\le \left(\frac{1}{2}K_2-II_{\varepsilon}\right)^{\frac{p}{p^*}}+o(\varepsilon^{\frac{p-1}{p}})\\
&\le \left(\frac{1}{2}K_2\right)^{\frac{p}{p^*}}-\frac{p}{p^*}\left(\frac{1}{2}K_2\right)^{\frac{p}{p^*}-1}II_{\varepsilon}+o(\varepsilon^{\frac{p-1}{p}})\\
&= \left(\frac{1}{2}K_2\right)^{\frac{p}{p^*}}-\frac{p}{p^*}\left(\frac{1}{2}K_2\right)^{\frac{-p}{N}}II_{\varepsilon}+o(\varepsilon^{\frac{p-1}{p}}).
\end{aligned}
$$
%%%%%%%%%%$$\begin{array}{ccc}
%%%%%%%%%%\left[\frac{1}{2}K_2(\varepsilon)-II_{\varepsilon}+o(\varepsilon^{\frac{p-1}{p}})\right]^{\frac{p}{p^*}} &\le& \left(\frac{1}{2}K_2(\varepsilon)-II_{\varepsilon}\right)^{\frac{p}{p^*}}+o(\varepsilon^{\frac{p-1}{p}})\\
%%%%%%%%%%&\le& \left(\frac{1}{2}K_2(\varepsilon)\right)^{\frac{p}{p^*}}-\frac{p}{p^*}\left(\frac{1}{2}K_2(\varepsilon)\right)^{\frac{p}{p^*}-1}II_{\varepsilon}+o(\varepsilon^{\frac{p-1}{p}})\\
%%%%%%%%%%&=& \left(\frac{1}{2}K_2(\varepsilon)\right)^{\frac{p}{p^*}}-\frac{p}{p^*}\left(\frac{1}{2}K_2(\varepsilon)\right)^{\frac{-p}{N}}II_{\varepsilon}+o(\varepsilon^{\frac{p-1}{p}}),\\
%%%%%%%%%%\end{array} $$
Substitution into \eqref{est-k} gives %applying this in (\ref{k-estimate}) we obtain
$$\left[-I_{\varepsilon}+o(\varepsilon^{\frac{p-1}{p}})\right]\left(\frac{1}{2}K_2\right)^{\frac{N-p}{N}}<-\frac{1}{2}K_1\left[\frac{p}{p^*}\left(\frac{1}{2}K_2\right)^{\frac{-p}{N}}II_{\varepsilon}\right]+o(\varepsilon^{\frac{p-1}{p}}),$$
which reduces to
	\begin{equation}\label{K-estimate2}\frac{I_{\varepsilon}}{II_{\varepsilon}}> \left(\frac{N-p}{N}\right)\frac{K_1}{K_2}+o(1).%,\,\mbox{as}\, \varepsilon \to 0.		
\end{equation}
Notice that %Note that
$$
\begin{aligned}
\lim_{\varepsilon \to 0}\frac{I_{\varepsilon}}{II_{\varepsilon}}=\lim_{\varepsilon \to 0}\frac{\varepsilon^{-\frac{p-1}{p}} I_{\varepsilon}}{\varepsilon^{-\frac{p-1}{p}}II_{\varepsilon}}&=\left(\frac{N-p}{p-1}\right)^p\frac{\int_{\R^{N-1}}\frac{g(y')|y'|^{p/p-1}}{(1+|y'|^{p/p-1})^N}dy'}{\int_{\R^{N-1}}\frac{g(y')}{(1+|y'|^{p/p-1})^N}dy'}\\
&=\left(\frac{N-p}{p-1}\right)^p\frac{\frac{1}{2(N-1)}\sum_{i=1}^{N-1}\alpha_i\int_{\R^{N-1}}\frac{|y'|^{\frac{p}{p-1}+2}}{(1+|y'|^{p/(p-1)})^N}dy'}{\frac{1}{2(N-1)}\sum_{i=1}^{N-1}\alpha_i\int_{\R^{N-1}}\frac{|y'|^{2}}{(1+|y'|^{p/p-1})^N}dy'}.
\end{aligned}
$$
%%%%%%$$\begin{array}{ccc}
%%%%%%\lim_{\varepsilon \to 0}\frac{I_{\varepsilon}}{II_{\varepsilon}}=\lim_{\varepsilon \to 0}\frac{\varepsilon^{-\frac{p-1}{p}} I_{\varepsilon}}{\varepsilon^{-\frac{p-1}{p}}II_{\varepsilon}}&=&\lim_{\varepsilon \to 0}\left(\frac{N-p}{p-1}\right)^p\frac{\int_{\R^{N-1}}\frac{g(y')|y'|^{p/p-1}}{(1+|y'|^{p/p-1})^N}dy'}{\int_{\R^{N-1}}\frac{g(y')}{(1+|y'|^{p/p-1})^N}dy'}\\
%%%%%%&=&\lim_{\varepsilon \to 0}\left(\frac{N-p}{p-1}\right)^p\frac{\frac{1}{2(N-1)}\sum_{i=1}^{N-1}\alpha_i\int_{\R^{N-1}}\frac{|y'|^{\frac{p}{p-1}+2}}{(1+|y'|^{(p/p-1)}}^Ndy'}{\frac{1}{2(N-1)}\sum_{i=1}^{N-1}\alpha_i\int_{\R^{N-1}}\frac{|y'|^{2}}{(1+|y'|^{p/p-1})^N}dy'}.
%%%\end{array}$$
Using polar coordinates, we calculate 
$$\lim_{\varepsilon \to 0}\frac{I_{\varepsilon}}{II_{\varepsilon}}=\left(\frac{N-p}{p-1}\right)^p\frac{\int_{0}^{+\infty}\frac{r^{\frac{p}{p-1}+N}}{(1+r^{p/(p-1)})^N}dr}{\int_{0}^{+\infty}\frac{r^{N}}{(1+r^{p/(p-1)})^N}dr}.$$ Now for all $k\in \R$ with $\frac{p}{p-1}\le k \le \frac{p}{p-1}N,$ integrating by parts, we obtain %we obtain
\begin{equation}\label{int-est1}
	\int_{0}^{+\infty}\frac{r^k}{\left(1+r^{\frac{p}{p-1}}\right)^N}dr=\int_{0}^{+\infty}\frac{r^{k-\frac{p}{p-1}}}{\left(1+r^{\frac{p}{p-1}}\right)^{N-1}}dr-\int_{0}^{+\infty}\frac{r^{k-\frac{p}{p-1}}}{\left(1+r^{\frac{p}{p-1}}\right)^{N}}dr,
\end{equation}and %, integrating by parts, we have
\begin{equation}\label{int-est2}
	\int_{0}^{+\infty}\frac{r^{k-\frac{p}{p-1}}}{\left(1+r^{\frac{p}{p-1}}\right)^{N-1}}dr=\frac{p(N-1)}{(p-1)k-1}\int_{0}^{+\infty}\frac{r^{k}}{\left(1+r^{\frac{p}{p-1}}\right)^{N}}dr.
\end{equation}
Replacing (\ref{int-est2}) in (\ref{int-est1}), it follows that
\begin{equation}\label{int-est3}
	\int_{0}^{+\infty}\frac{r^k}{\left(1+r^{\frac{p}{p-1}}\right)^N}dr=\frac{(p-1)k-1}{pN-(p-1)-(p-1)k}\int_{0}^{+\infty}\frac{r^{k-\frac{p}{p-1}}}{\left(1+r^{\frac{p}{p-1}}\right)^{N}}dr.
\end{equation}
Choosing $k=N+\frac{p}{p-1}$ in \eqref{int-est3}, we achieve
$$\frac{\int_{0}^{+\infty}\frac{r^{N+\frac{p}{p-1}}}{\left(1+r^{\frac{p}{p-1}}\right)^N}dr}{\int_{0}^{+\infty}\frac{r^{N}}{\left(1+r^{\frac{p}{p-1}}\right)^{N}}dr}=\frac{(p-1)(N+1)}{N-(2p-1)}.$$
Therefore
\begin{equation}\label{I/II parte2}
\lim_{\varepsilon \to 0}\frac{I_{\varepsilon}}{II_{\varepsilon}}=\frac{(N-p)^p(N+1)}{(p-1)^{p-1}\left[N-(2p-1)\right]}.
\end{equation}
%%%%%%%%%%%%%$$\lim_{\varepsilon \to 0}\frac{I_{\varepsilon}}{II_{\varepsilon}}=\lim_{\varepsilon \to 0}\left(\frac{(N-p)^p(N+1)}{(p-1)^{p-1}\left[N-(2p-1)\right]}\right).$$
On the other hand,
$$
\begin{aligned}
\frac{(N-p)K_1}{nK_2}&=\frac{N-p}{N}\left(\frac{N-p}{p-1}\right)^p\frac{\int_{\R^{N}}\frac{|y|^{\frac{p}{p-1}}}{(1+|y|^{p/(p-1)})^N}dy}{\int_{\R^{N}}\frac{dy}{(1+|y|^{p/(p-1)})^N}}\\
	&=\frac{N-p}{N}\left(\frac{N-p}{p-1}\right)^p\frac{\int_{0}^{+\infty}\frac{r^{\frac{p}{p-1}}+N-1}{(1+r^{p/(p-1)})^N}dr}{\int_{0}^{\infty}\frac{r^{N-1}}{(1+r^{p/(p-1)})^N}dr}.
\end{aligned}
$$Choosing now $k=N+\frac{p}{p-1}-1$ in (\ref{int-est3}), we obtain
$$\frac{(N-p)K_1}{nK_2}=\frac{\left(N-p\right)^p}{(p-1)^{p-1}},$$ %,\,\,\mbox{if}\,\, N \ge p^2.$$
so, from \eqref{I/II parte2}, it follows that \eqref{K-estimate2} holds for all $N \ge p^2.$

\bigskip

\noindent\mbox{\scriptsize$\bullet$}\quad\underline{Case  $N=p^2-p+1$}. Let $0<\alpha\le A<\infty$ be such that $\alpha |x'|^2\le h(x')\le A|x'|^2,$ for all $\,x' \in D(0,\delta).$ Then
$$
\begin{aligned}
K_1(\varepsilon)&=\int_{\R^{N}_{+}}|\nabla u_{\varepsilon}|^pdx-\int_{D(0,\delta)}dx'\int_{0}^{h(x')}|\nabla u_\varepsilon|^p dx_N +O(\varepsilon^{\frac{N-p}{p}})\\
	&\le \frac{1}{2}K_1-\int_{D(0,\delta)}dx'\int_{0}^{h(x')}|\nabla u_\varepsilon|^p dx_N +O(\varepsilon^{\frac{p-1}{p}}).
\end{aligned}
$$Now we estimate
$$
\begin{aligned}
\int_{D(0,\delta)}dx'\int_{0}^{a|x'|^2}|\nabla u_\varepsilon|^p dx_N &\ge C \varepsilon^{\frac{N-p}{p}}\int_{D(0,\delta)}\frac{a|x'|^{\frac{p}{p-1}+2}}{(\varepsilon+|x'|^{\frac{p}{p-1}})^N}dx'\\
	&= C \varepsilon^{-N}\varepsilon^{\frac{N-p}{p}}\int_{D(0,\delta)}\frac{|x'|^{\frac{p}{p-1}+2}}{(1+|\frac{x'}{\varepsilon^{(p-1)/p}}|^{\frac{p}{p-1}})^N}dx'\\
		&= C \varepsilon^{-N+\frac{N-p}{p}}\int_{0}^{\frac{\delta}{\varepsilon^{\frac{p}{p-1}}}}\frac{\varepsilon^{\frac{p-1}{p}\left(\frac{p}{p-1}+2\right)}r}{(1+r^{\frac{p}{p-1}})^N}\varepsilon^{\frac{p-1}{p}(N-1)}dr=C\varepsilon^{\frac{p-1}{p}}|\mbox{\textnormal{ln\,}} \varepsilon|.
\end{aligned}
$$Therefore we obtain
\begin{equation}
	K_1(\varepsilon)=\frac{1}{2}K_1-C\varepsilon^{\frac{p-1}{p}}|\mbox{\textnormal{ln\,}} \varepsilon|+ O(\varepsilon^{\frac{p-1}{p}}).
\end{equation}
Analogously,
$$
\begin{aligned}
K_2(\varepsilon)&=\frac{1}{2}K_2-\int_{D(0,\delta)}dx'\int_{0}^{h(x')}| u_\varepsilon|^{p^*} dx_N +O(\varepsilon^{\frac{N}{p}})\\
	&\le  \frac{1}{2}K_2-\int_{D(0,\delta)}A\frac{\varepsilon^{\frac{N}{p}}|x'|^{2}}{(\varepsilon+|x'|^{\frac{p}{p-1}})^N}dx' +O(\varepsilon^{\frac{N}{p}})\\
	&=\frac{1}{2}K_2-O(\varepsilon^{\frac{p-1}{p}}).
\end{aligned}
$$So, similarly as done in the case $N \ge p^2,$ we have
$$
\begin{aligned}
	Y_{\varepsilon}=\sup_{t>0}J(tu_{\varepsilon})=J(t_{\varepsilon}u_{\varepsilon})
	\le\, &\, \sup_{t\ge 0}\left[\frac{t^p}{p}K_1(\varepsilon)
	-\frac{t^{p^*}}{p^*}K_2(\varepsilon)\right]-\lambda\frac{C_0^q}{q}\int_{\Omega}u_{\varepsilon}^qdx+O(\varepsilon^{\frac{p-1}{p}})\\
	&= \frac{1}{N}\left[\frac{K_1(\varepsilon)}{K_2(\varepsilon)^{\frac{N-p}{N}}}\right]^{N/p}-C\lambda \varepsilon^{\frac{N-p}{p^2}q}+O(\varepsilon^{\frac{p-1}{p}}).
\end{aligned}
$$Notice that
\begin{equation}\label{int-est 4}
	\frac{K_1(\varepsilon)}{K_2(\varepsilon)^{\frac{N-p}{N}}} < 2^{\frac{-p}{N}}S_{\Sigma,H}+O(\varepsilon^{\frac{p-1}{p}}).
\end{equation}Indeed, this follows since
$$\frac{1}{2}K_1-C\varepsilon^{\frac{p-1}{p}}|ln \varepsilon|<2^{\frac{-p}{N}}S_H\left[\frac{1}{2}K_2-O(\varepsilon^{\frac{p-1}{p}})\right]^{\frac{N-p}{N}}+O(\varepsilon^{\frac{p-1}{p}})=\frac{1}{2}S_{\Sigma,H}K_2^{\frac{N-p}{N}}+O(\varepsilon^{\frac{p-1}{p}}).$$
Taking into account that $S_H=\frac{K_1}{K_2^{\frac{N-p}{N}}},$ and $\frac{N-p}{p^2}q-\frac{N}{p}<0$ for $p<q<p^*,$
we obtain
$$	Y_{\varepsilon} \le \frac{1}{N}S_{\Sigma,H}^{\frac{N}{p}}+O(\varepsilon^{\frac{p-1}{p}})$$The proof is complete.
\end{proof}

\section{Proof of Theorem \ref{Main}}

\noindent \textit{Proof of Theorem $\ref{Main}$.} Set
$$c^{*}=\inf_{v\in W^{1,p}(\Omega)\setminus \{0\}}\left\{\sup_{t\ge 0}J(tv)\right\},$$
where $c^{*} > c,$ and $c$ denotes the mountain pass level. From Lemma \ref{RK2}, we have $c<\frac{1}{2N\tilde{C}}S_{\Sigma,H}^{N/p}.$ Hence, Lemma \ref{RM1} ensures there exists at least one non-trivial solution $u\in W^{1,p}(\Omega)$ of $(P)$. From a standard argument, testing $(P)$ with  $u^-$ implies $u \geqslant 0$. Applying Theorem \ref{thm-L-infinito} we have $u$ is a bounded non-negative solution of $(P)$. Then, arguing as in \cite{PPS}, it is possible to use classical elliptic regularity theory \cite{T} and the Harnack inequality established in \cite{Trudinger} to conclude that $u \in C^{1,\alpha}(\Omega)$ and $u>0$ in $\Omega$. The proof is complete. \hfill$\Box$
%Arguing as in \cite{PPS}, by Theorem \ref{thm-L-infinito} we have a bounded non-negative solution. Then, using Classical elliptic regularity theory \cite{T} and the Harnack inequality proved in \cite{Trudinger} we get $u \in C^{1,\alpha}(\Omega)$ and positive.

\end{document}